\newtheorem{theorem}{Theorem}[section] 
\newtheorem{proposition}[theorem]{Proposition}
\newtheorem{lemma}[theorem]{Lemma} 
\newtheorem{corollary}[theorem]{Corollary}
\theoremstyle{definition}
\newtheorem{definition}[theorem]{Definition}
\newtheorem{conjecture}[theorem]{Conjecture}
\newtheorem{exm}[theorem]{Example}
\newtheorem{rem}[theorem]{Remark}
\newcommand{\svdots}{\raisebox{3pt}{$\scalebox{.75}{\vdots}$}}
\title[Ehrhart-Equivalence of integral $n$-polytopes in $\mathbb{R}^n$]{Ehrhart-Equivalence, Equidecomposability, and Unimodular Equivalence of Integral Polytopes}
\author[F. Abney-McPeek]{Fiona Abney-McPeek}
\address{University of Chicago Laboratory Schools, Chicago, IL 60637}
\email{fabney-mcpeek@ucls.uchicago.edu, f.abneymcp@gmail.com}
\author[S. Biswas]{Sanket Biswas}
\address{Sant Longowal Institute of Engineering and Technology, Sangrur, Punjab-148106, India}
\email{1940108@sliet.ac.in, snktbiswas@gmail.com}
\author[S. Dutta]{Senjuti Dutta}
\address{Chennai Mathematical Institute, Kelambakkam-603103, India}
\email{senjuti@cmi.ac.in, senjutid58@gmail.com}
\author[Y. Huang]{Yongyuan Huang}
\address{Boston University, Boston, MA 02215}
\email{yhuang22@bu.edu}
\author[D. Li]{Deyuan Li}
\address{Yale University, New Haven, CT 06511}
\email{deyuan.li@yale.edu}
\author[N. Xu]{Nancy Xu}
\address{Princeton University, Princeton, NJ 08544}
\email{nancyx@princeton.edu, xu.nancy26@gmail.com}
\begin{document}

\begin{abstract}
    Ehrhart polynomials are extensively-studied structures that interpolate the discrete volume of the dilations of integral $n$-polytopes. The coefficients of Ehrhart polynomials, however, are still not fully understood, and it is not known when two polytopes have equivalent Ehrhart polynomials. In this paper, we establish a relationship between Ehrhart-equivalence and other forms of equivalence: the $\operatorname{GL}_n(\mathbb{Z})$-equidecomposability and unimodular equivalence of two integral $n$-polytopes in $\mathbb{R}^n$. We conjecture that any two Ehrhart-equivalent integral $n$-polytopes $P,Q\subset\mathbb{R}^n$ are $\operatorname{GL}_n(\mathbb{Z})$-equidecomposable into $\frac{1}{(n-1)!}$-th unimodular simplices, thereby generalizing the known cases of $n=1, 2, 3$. We also create an algorithm to check for unimodular equivalence of any two integral $n$-simplices in $\mathbb{R}^n$. We then find and prove a new one-to-one correspondence between unimodular equivalence of integral $2$-simplices and the unimodular equivalence of their $n$-dimensional pyramids. Finally, we prove the existence of integral $n$-simplices in $\mathbb{R}^n$ that are not unimodularly equivalent for all $n \ge 2$.
\end{abstract}

\maketitle
\section{Introduction}
\subsection{Background}
Pick's Theorem, described by Georg Pick in 1899 \cite{pick}, states that for any simple polygon in $\mathbb{R}^2$ whose vertices are lattice points, the area of the polygon is given by $$A=I + \frac{B}{2} - 1,$$ where $I$ is the number of interior lattice points and $B$ is the number of lattice points on its boundary.

While Pick's Theorem holds for all such simple polygons in $\mathbb{R}^2$, Reeve proved that no generalized formula for Pick's Theorem exists in higher dimensions \cite{reeve}. In the 1960's, Eugène Ehrhart \cite{ehrhart} introduced the theory of Ehrhart polynomials as a generalization of Pick's Theorem for any convex rational $n$-polytopes\footnote{Unless otherwise stated, we shall use the term ``polytope'' in place of ``convex polytope'' throughout the rest of the paper.}, the convex hull of finitely many points in $\mathbb{Q}^n$. Ehrhart proved that, for any nonnegative integer $t$ and any rational $n$-polytope $P \subset \mathbb{R}^n$, the number of lattice points in $tP$ can be interpolated by a quasi-polynomial $L_P(t)$ of degree $n$, so that for all $t \in \mathbb{Z}_{>0}$,
$$L_P(t)=|tP\cap\mathbb{Z}^n|=c_n(t)t^n +\cdots+c_1(t)t+c_0(t).$$ 
$L_p$ is called the \emph{Ehrhart quasi-polynomial} of $P$. It is known that:
\begin{enumerate}
    \item The coefficients $c_0(t), \ldots, c_n(t)$ are periodic functions from $\mathbb{Z}$ to $\mathbb{Q}$.
    \item Evaluating $L_P$ at negative integers yields
    $$L_P(-t)=(-1)^{\dim P}L_{P^{\circ}} (t),$$ where $P^{\circ}$ denotes the strict interior of the polytope $P$ and $$L_{P^{\circ}} (t)=|tP^{\circ}\cap\mathbb{Z}^n|.$$ This is known as the \emph{Ehrhart-Macdonald reciprocity law}. 
\end{enumerate}

When $P$ is an integral polytope, $L_P(t)$ must be an $n$-dimensional polynomial satisfying:
\begin{enumerate}
    \item the coefficients $c_0, \hdots, c_n$ are rational constants.
    \item $c_0$ is the Euler characteristic of $P$, and so $c_0=1$ for closed convex polytopes.
    \item $c_n$ is the Euclidean volume of $P$.
\end{enumerate} In this case we call $L_P$ to be the Ehrhart polynomial of $P$. 

While the Ehrhart polynomial of integral $n$-polytopes relates its Euclidean volume to the number of lattice points both strictly within the polytope and on its boundary, thereby acting as a generalization of Pick's Theorem, a lot is still unknown about the behavior of Ehrhart polynomials. In this paper, we establish a relationship between Ehrhart-equivalence and $\operatorname{GL}_n(\mathbb{Z})$-equidecomposability, and unimodular equivalence of two integral $n$-polytopes in $\mathbb{R}^n$, two other forms of equivalence.

\subsection{General Definitions}
We first define the concepts of Ehrhart-equivalence, unimodular equivalence and $\operatorname{GL}_n(\mathbb{Z})$-equidecomposability.

\begin{definition}
We call two rational polytopes $Q_1,Q_2\subset\mathbb{R}^n$ \emph{Ehrhart-equivalent} if for all $k \in \mathbb{Z}_{\ge 1}$, $|kQ_1\cap \mathbb{Z}^n|=|kQ_2\cap \mathbb{Z}^n|$. This occurs exactly when $Q_1$ and $Q_2$ have the same Ehrhart quasi-polynomial.
\end{definition}

\begin{exm}
Consider the $4$-dimensional integral polytopes $P_1$ and $P_2$ defined by $$P_1= \mathrm{conv}\{(0,0,0,0),(1,1,1,1),(1,1,2,2),(3,2,1,2),(2,3,1,3)\}$$
and $$P_2= \mathrm{conv}\{(3,3,3,3),(4,4,3,3),(7,6,9,10),(1,2,1,1),(1,1,2,1)\}.$$ It turns out that $P_1$ and $P_2$ have the same Ehrhart polynomial
$$L_{P_1}(t)=\frac{1}{8}t^4 + \frac{3}{4}t^3 + \frac{15}{8}t^2 + \frac{9}{4}t + 1=L_{P_2}(t).$$ Hence, $P_1$ and $P_2$ are Ehrhart-equivalent. 
\end{exm} 

However, other forms of equivalences also exist for $n$-polytopes, and we show that they are closely related to Ehrhart-equivalence.

\begin{definition}
An \emph{affine-unimodular transformation} $U: \mathbb{R}^n \to \mathbb{R}^n$ is a transformation of the form
$$U(\mathbf{v}) = A\mathbf{v}+\mathbf{b},$$
where $A\in \operatorname{GL}_n(\mathbb{Z})$ and $\mathbf{b}\in\mathbb{Z}^n$. This is denoted by $U \in \operatorname{GL}_n(\mathbb{Z})\ltimes\mathbb{Z}^n$.
\end{definition}

\begin{definition} \label{uni eqi} We call two polytopes $P,Q\subset\mathbb{R}^n$ \emph{unimodularly equivalent} if there exists some affine-unimodular transformation $U\in\operatorname{GL}_n(\mathbb{Z})\ltimes\mathbb{Z}^n$ satisfying $U(P)=Q$.\end{definition}

Note that unimodular equivalence is an equivalence relation, and it turns out to be surprisingly closely related to known results on Ehrhart-equivalence. We call a set $S \subset \mathbb{R}^n$ \emph{relatively open} if it is open inside its affine span \cite{3D}. This leads us to another notion of equivalence.
\begin{definition}
We say that two polytopes $P, Q \subset \mathbb{R}^n$ are \emph{$\operatorname{GL}_n(\mathbb{Z})$-equidecomposable} if there exist relatively open simplices $P_1,\dots,P_r$ and $Q_1,\dots,Q_r$ with $$P=\bigsqcup_{i=1}^r P_i\quad \text{and}\quad Q=\bigsqcup_{i=1}^r Q_i$$ satisfying for all $1 \le i \le r$, $U_i(P_i)=Q_i$ and $U_i\in \operatorname{GL}_n(\mathbb{Z})\ltimes \mathbb{Z}^n$ are affine-unimodular transformations.
\end{definition}

\subsection{Main Results}
The rest of the paper is formatted as follows. We begin in Section~\ref{prelim} with relevant terminology and past results on the relationships among Ehrhart-equivalence, $\operatorname{GL}_n(\mathbb{Z})$-equidecomposability, and unimodular equivalence. In Section~\ref{main results}, we introduce a generalized conjecture that any two Ehrhart-equivalent integral $n$-polytopes are $\operatorname{GL}_n(\mathbb{Z})$-equidecomposable into $\tfrac{1}{(n-1)!}$th-unimodular simplices.  We also provide an algorithm to test for whether two $n$-simplices are unimodularly equivalent and proved a generalized result of unimodular equivalence given Ehrhart-equivalent $n$-polytopes. Then in Section~\ref{pyramids}, we establish a one-to-one correspondence between the unimodular equivalence of $2$-simplices and unimodular equivalence of their $n$-dimensional pyramids. Finally in Section~\ref{future}, we raise several conjectures and open questions and suggest avenues for further investigation.

\section{Preliminaries} \label{prelim}
\begin{rem}\label{uni}If two integral simplices $S,T\subset\mathbb{R}^n$ are unimodularly equivalent, then they are also $\operatorname{GL}_n(\mathbb{Z})$-equidecomposable by definition.\end{rem}

\begin{proposition}\label{dilations}
If two polytopes $P,Q\subset\mathbb{R}^n$ are $\operatorname{GL}_n(\mathbb{Z})$-equidecomposable, then for all $k \in \mathbb{N}$, $kP$ and $kQ$ are also $\operatorname{GL}_n(\mathbb{Z})$-equidecomposable.
\end{proposition}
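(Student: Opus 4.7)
The plan is to produce the required decomposition of $kP$ and $kQ$ directly by dilating each piece of the given decomposition of $P$ and $Q$. By hypothesis we have relatively open simplices $P_1,\dots,P_r$ and $Q_1,\dots,Q_r$ with $P=\bigsqcup_i P_i$, $Q=\bigsqcup_i Q_i$, and affine-unimodular transformations $U_i(\mathbf{v}) = A_i\mathbf{v}+\mathbf{b}_i$ with $A_i\in\operatorname{GL}_n(\mathbb{Z})$, $\mathbf{b}_i\in\mathbb{Z}^n$, satisfying $U_i(P_i)=Q_i$. The candidate pieces for $kP$ and $kQ$ are $kP_1,\dots,kP_r$ and $kQ_1,\dots,kQ_r$, and the candidate transformations are
\[
U_i'(\mathbf{v}) \;:=\; A_i\mathbf{v} + k\mathbf{b}_i.
\]

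First I would verify that $kP_1,\dots,kP_r$ gives a valid partition of $kP$ into relatively open simplices (and similarly for $kQ$). For any integer $k\ge 1$, the map $\mathbf{v}\mapsto k\mathbf{v}$ is a bijection of $\mathbb{R}^n$, so it carries the disjoint union $P=\bigsqcup_i P_i$ to the disjoint union $kP=\bigsqcup_i kP_i$. Moreover, dilation by $k$ carries an affinely independent set of vertices to an affinely independent set of vertices, and restricts to a homeomorphism between the affine span of $P_i$ and that of $kP_i$, so the relative interior of a simplex maps to the relative interior of the dilated simplex; hence each $kP_i$ is again a relatively open simplex. The case $k=0$, if it is included in $\mathbb{N}$, is trivial, since $kP=kQ=\{\mathbf{0}\}$.

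Next I would check that each $U_i'$ is a genuine affine-unimodular transformation matching $kP_i$ with $kQ_i$. Unimodularity is immediate: $A_i\in\operatorname{GL}_n(\mathbb{Z})$ is unchanged, and $k\mathbf{b}_i\in\mathbb{Z}^n$ since $k\in\mathbb{N}$ and $\mathbf{b}_i\in\mathbb{Z}^n$, so $U_i'\in\operatorname{GL}_n(\mathbb{Z})\ltimes\mathbb{Z}^n$. For the image, a one-line computation shows
\[
U_i'(k\mathbf{v}) \;=\; A_i(k\mathbf{v}) + k\mathbf{b}_i \;=\; k\bigl(A_i\mathbf{v}+\mathbf{b}_i\bigr) \;=\; k\,U_i(\mathbf{v}),
\]
so $U_i'(kP_i) = k\,U_i(P_i) = kQ_i$, as required.

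There is no real obstacle here; the content of the proposition is essentially that affine-unimodular transformations commute nicely with positive integer dilations via the twist $\mathbf{b}_i\mapsto k\mathbf{b}_i$. The only thing worth being careful about is confirming that the dilated pieces still satisfy the structural requirements of the definition (disjointness and relative openness of simplices), which follows from the fact that dilation by a positive scalar is a homeomorphism on $\mathbb{R}^n$ preserving affine independence.
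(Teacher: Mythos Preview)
Your proof is correct and follows essentially the same approach as the paper: dilate each piece $P_i\mapsto kP_i$, $Q_i\mapsto kQ_i$ and replace the translation $\mathbf{b}_i$ by $k\mathbf{b}_i$, using the identity $A_i(k\mathbf{v})+k\mathbf{b}_i = k(A_i\mathbf{v}+\mathbf{b}_i)$. If anything, you are more careful than the paper in explicitly verifying that the dilated pieces remain relatively open simplices and that their disjoint union is $kP$.
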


\begin{proof}
Consider any two polytopes $P,Q\subset\mathbb{R}^n$ that are $\operatorname{GL}_n(\mathbb{Z})$-equidecomposable. This by definition means that there exists relatively open simplices $P_1,\dots,P_r$ and $Q_1,\dots,Q_r$ such that $U_i(P_i)=Q_i$, for all $1\le i\le r$, where $U_i\in \operatorname{GL}_n(\mathbb{Z})\ltimes \mathbb{Z}^n$, $1\le i\le r$ are affine-unimodular transformations and $$P=\bigsqcup_{i=1}^r P_i\quad \text{and}\quad Q=\bigsqcup_{i=1}^r Q_i.$$

If $U_i$ is of the form
\begin{align*}
    U_i(\mathbf{x})= A_i\mathbf{x}+\mathbf{c_i}
\end{align*}
for $x, y \in \mathbb{R}^n$, then for any $k \in \mathbb{N}$
$$kU_i(\mathbf{x}) =kA_i\mathbf{x}+k\mathbf{c_i} = A_i(k\mathbf{x})+k\mathbf{c_i},$$
so $kP_i$ and $kQ_i$ are unimodularly equivalent via the affine-unimodular transformation $V_i\in\operatorname{GL}_n(\mathbb{Z})\ltimes \mathbb{Z}^n$.
where $$V_i(\mathbf{v})=  A_i\mathbf{v}+ k\mathbf{c_i},$$ for all $\mathbf{v}\in\mathbb{R}^n.$ Now, for any $k\in\mathbb{N}$ we have 
\begin{align*}
    kP &= k\bigsqcup^r_{i = 1}P_i = \bigsqcup^r_{i = 1}kP_i, \\
    kQ &= k\bigsqcup^r_{i = 1}Q_i = \bigsqcup^r_{i = 1}kQ_i.
\end{align*} Hence, by definition $kP$ and $kQ$ are $\operatorname{GL}_n(\mathbb{Z})$-equidecomposable for all $k\in\mathbb{N}.$
\end{proof}

\begin{theorem}\label{equivalence}If two rational polytopes $P,Q\subset\mathbb{R}^n$ are $\operatorname{GL}_n(\mathbb{Z})$-equidecomposable, then they are Ehrhart-equivalent.\end{theorem}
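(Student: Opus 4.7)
The plan is to unpack Ehrhart-equivalence into a statement about lattice-point counts of dilates, and then to reduce those counts on $P$ and $Q$ to a sum of lattice-point counts of corresponding pieces that are related by affine-unimodular transformations.

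First, fix an arbitrary $k \in \mathbb{Z}_{\ge 1}$. By the hypothesis and Proposition~\ref{dilations}, $kP$ and $kQ$ are $\operatorname{GL}_n(\mathbb{Z})$-equidecomposable via relatively open simplicial pieces $kP_1,\dots,kP_r$ and $kQ_1,\dots,kQ_r$, with affine-unimodular maps $V_i \in \operatorname{GL}_n(\mathbb{Z}) \ltimes \mathbb{Z}^n$ satisfying $V_i(kP_i) = kQ_i$. Since the decompositions are disjoint unions, the lattice-point count is additive:
\[
|kP \cap \mathbb{Z}^n| \;=\; \sum_{i=1}^r |kP_i \cap \mathbb{Z}^n|, \qquad |kQ \cap \mathbb{Z}^n| \;=\; \sum_{i=1}^r |kQ_i \cap \mathbb{Z}^n|.
\]

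Next, I would verify that affine-unimodular transformations preserve the integer lattice bijectively. If $V_i(\mathbf{x}) = A_i \mathbf{x} + \mathbf{b}_i$ with $A_i \in \operatorname{GL}_n(\mathbb{Z})$ and $\mathbf{b}_i \in \mathbb{Z}^n$, then $A_i$ and $A_i^{-1}$ both have integer entries, so $A_i$ restricts to a bijection $\mathbb{Z}^n \to \mathbb{Z}^n$; composing with the integer translation by $\mathbf{b}_i$ yields a bijection $V_i : \mathbb{Z}^n \to \mathbb{Z}^n$. Consequently $V_i$ restricts to a bijection $kP_i \cap \mathbb{Z}^n \to kQ_i \cap \mathbb{Z}^n$, giving $|kP_i \cap \mathbb{Z}^n| = |kQ_i \cap \mathbb{Z}^n|$ for every $i$.

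Summing over $i$ and combining with the previous displayed equation yields $|kP \cap \mathbb{Z}^n| = |kQ \cap \mathbb{Z}^n|$. Since $k \geq 1$ was arbitrary, $P$ and $Q$ are Ehrhart-equivalent by definition.

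The argument is largely bookkeeping; the only mild subtlety is ensuring that the counts split cleanly over the pieces, which is guaranteed by the fact that the $P_i$ (and $Q_i$) form a partition, so that each lattice point of $kP$ lies in exactly one $kP_i$. Thus I do not anticipate a genuine obstacle—the proof is essentially a two-step reduction combining Proposition~\ref{dilations} with the lattice-preserving nature of $\operatorname{GL}_n(\mathbb{Z}) \ltimes \mathbb{Z}^n$.
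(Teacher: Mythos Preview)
Your argument is correct and follows essentially the same route as the paper: both proofs invoke Proposition~\ref{dilations} to pass to the $k$th dilates and then use that each affine-unimodular map $V_i$ restricts to a bijection of $\mathbb{Z}^n$, so the lattice-point counts of corresponding pieces agree and sum to give $|kP\cap\mathbb{Z}^n|=|kQ\cap\mathbb{Z}^n|$. Your version is in fact slightly more explicit about why $V_i$ is a lattice bijection and about the additivity over the disjoint pieces, but the underlying idea is identical.
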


\begin{proof} 
Suppose $P, Q \subset \mathbb{R}^n$ are rational polytopes that are $\operatorname{GL}_n(\mathbb{Z})$-equidecomposable. Thus, there are relatively open simplices $P_1, \dots, P_r$ and $Q_1,\dots, Q_r$ such that $U_i(P_i)=Q_i$,  where $U_i\in \operatorname{GL}_n(\mathbb{Z})\ltimes \mathbb{Z}^n$ are affine-unimodular transformations for all $1\le i\le r$  and $$P=\bigsqcup_{i=1}^r P_i\quad \text{and}\quad Q=\bigsqcup_{i=1}^r Q_i.$$
This yields a bijection between the lattice points in each $P_i$ and $Q_i$, thereby inducing a bijection between the lattice points in $P$ and $Q$. This implies by Proposition~\ref{dilations} there is also a bijection between the lattice points in $kP$ and $kQ$ for all $k$, resulting in
$$|kP \cap \mathbb{Z}^n| = |kQ \cap \mathbb{Z}^n|.$$ 
By definition, this implies that $P$ and $Q$ are Ehrhart-equivalent.\end{proof}

\begin{corollary}\label{uni2}If two integral simplices $S,T\subset\mathbb{R}^n$ are unimodularly equivalent, then they are Ehrhart-equivalent.\end{corollary}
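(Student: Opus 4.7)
The plan is to derive this corollary immediately by chaining the two preceding results. First I would invoke Remark~\ref{uni}: any two unimodularly equivalent integral simplices $S, T \subset \mathbb{R}^n$ are, by definition, $\operatorname{GL}_n(\mathbb{Z})$-equidecomposable via the trivial one-piece decomposition $r=1$, $S_1=S$, $T_1=T$, with the witnessing affine-unimodular transformation $U$ serving as $U_1$. Then I would apply Theorem~\ref{equivalence} to conclude that $S$ and $T$ are Ehrhart-equivalent. The corollary is simply the composition of these two implications.

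If one instead preferred a self-contained one-line proof, the argument would run as follows. Write the witnessing transformation as $U(\mathbf{v}) = A\mathbf{v} + \mathbf{b}$ with $A \in \operatorname{GL}_n(\mathbb{Z})$ and $\mathbf{b} \in \mathbb{Z}^n$. For each $k \in \mathbb{Z}_{\ge 1}$, the map $V_k(\mathbf{v}) = A\mathbf{v} + k\mathbf{b}$ lies in $\operatorname{GL}_n(\mathbb{Z})\ltimes\mathbb{Z}^n$ and satisfies $V_k(kS) = kT$, exactly as in the proof of Proposition~\ref{dilations}. Since any element of $\operatorname{GL}_n(\mathbb{Z})\ltimes\mathbb{Z}^n$ restricts to a bijection of $\mathbb{Z}^n$ onto itself, $V_k$ induces a bijection $kS \cap \mathbb{Z}^n \to kT \cap \mathbb{Z}^n$, giving $|kS \cap \mathbb{Z}^n| = |kT \cap \mathbb{Z}^n|$ for every $k$, which is the definition of Ehrhart-equivalence.

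There is no real obstacle; all the substantive content was established in Proposition~\ref{dilations}, Theorem~\ref{equivalence}, and Remark~\ref{uni}, so the corollary is an essentially formal consequence.
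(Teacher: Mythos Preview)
Your proposal is correct and matches the paper's proof essentially verbatim: invoke Remark~\ref{uni} to pass from unimodular equivalence to $\operatorname{GL}_n(\mathbb{Z})$-equidecomposability, then apply Theorem~\ref{equivalence} to conclude Ehrhart-equivalence. The additional self-contained argument you sketch is also fine but goes beyond what the paper does.
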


\begin{proof}Select two arbitrary integral simplices $S,T\subset\mathbb{R}^n$ such that they are unimodularly equivalent. By Remark~\ref{uni}, $S$ and $T$ are $\operatorname{GL}_n(\mathbb{Z})$-equidecomposable. Thus, by Theorem~\ref{equivalence}, $S$ and $T$ are Ehrhart-equivalent.\end{proof}

While Thoerem~\ref{equivalence} tells us that Ehrhart-equvalent rational polytopes must also be $\operatorname{GL}_n(\mathbb{Z})$-equidecomposable, the converse may not necessarily hold. One approach to establishing weaker versions of the converse has been to use unimodular simplices.

\begin{definition}
An integral $n$-simplex $S\subset\mathbb{R}^n$ is said to be \emph{unimodular} if it has normalized volume $1$.
\end{definition}

\begin{proposition} \label{unisimp}
If $S, T \subset\mathbb{R}^n$ are unimodular $n$-simplices, then $S$ and $T$ are unimodularly equivalent.
\end{proposition}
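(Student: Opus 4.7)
The plan is to show that every unimodular $n$-simplex is unimodularly equivalent to the standard simplex $\Delta_n = \operatorname{conv}\{\mathbf{0}, \mathbf{e}_1, \dots, \mathbf{e}_n\}$, and then invoke transitivity of unimodular equivalence (noted after Definition~\ref{uni eqi}) to conclude that any two unimodular simplices $S, T$ are unimodularly equivalent to each other.

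To carry this out, I would take an arbitrary unimodular simplex $S$ with vertices $\mathbf{v}_0, \mathbf{v}_1, \dots, \mathbf{v}_n \in \mathbb{Z}^n$ and form the edge vectors $\mathbf{u}_i = \mathbf{v}_i - \mathbf{v}_0$ for $1 \le i \le n$. The Euclidean volume of $S$ is $\tfrac{1}{n!}\lvert \det[\mathbf{u}_1 \mid \cdots \mid \mathbf{u}_n]\rvert$, and since $S$ has normalized volume $1$ by hypothesis (i.e.\ Euclidean volume $\tfrac{1}{n!}$), the integer matrix $A := [\mathbf{u}_1 \mid \cdots \mid \mathbf{u}_n]$ satisfies $\lvert \det A\rvert = 1$, so $A \in \operatorname{GL}_n(\mathbb{Z})$.

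Next I define the affine-unimodular transformation $U \in \operatorname{GL}_n(\mathbb{Z})\ltimes \mathbb{Z}^n$ by $U(\mathbf{x}) = A\mathbf{x} + \mathbf{v}_0$ and verify that $U(\mathbf{0}) = \mathbf{v}_0$ and $U(\mathbf{e}_i) = \mathbf{u}_i + \mathbf{v}_0 = \mathbf{v}_i$. Because $U$ is affine and both $\Delta_n$ and $S$ are the convex hulls of their respective vertex sets, this gives $U(\Delta_n) = S$, so $\Delta_n$ and $S$ are unimodularly equivalent. Applying the same construction to $T$ and composing with the inverse yields the desired equivalence between $S$ and $T$.

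There is essentially no obstacle here: the only nontrivial input is the standard fact that an $n\times n$ integer matrix with determinant $\pm 1$ lies in $\operatorname{GL}_n(\mathbb{Z})$ (equivalently, its columns form a $\mathbb{Z}$-basis of $\mathbb{Z}^n$), together with the formula for the Euclidean volume of a simplex in terms of a vertex-edge determinant. Both are elementary and can be cited without further comment.
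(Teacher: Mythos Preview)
Your proof is correct and follows essentially the same approach the paper sketches: the paper suggests translating a vertex to the origin, expressing the remaining vertices as the columns of an integer matrix with determinant $\pm 1$, and then using that $\operatorname{GL}_n(\mathbb{Z})$ is a group. Your use of the standard simplex $\Delta_n$ as a common target and the explicit map $U(\mathbf{x}) = A\mathbf{x} + \mathbf{v}_0$ simply makes this sketch precise.
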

This can be proven by expressing any unimodular simplex with a vertex at the origin as an integer matrix with determinant $\pm 1$ and using the fact that $\operatorname{GL_n}(\mathbb{Z})$ is a group (see for instance \cite{4-simplices})

In \cite{3D}, Erbe, Haase, and Santos proved the converse of Theorem~\ref{equivalence} for integral $2$-polytopes by using the fact that all integral polygons have unimodular triangulations. The proof, however, fails for higher dimensions $n\ge 3$, since not all integral $n$-polytopes have unimodular triangulations. We build up the background to prove a weaker version of the converse of Theorem~\ref{equivalence}.

From this proposition, we later develop Proposition~\ref{unisimp1} to check the $\operatorname{GL}_n(\mathbb{Z})$-equidecomposability of two Ehrhart-equivalent integral polytopes $P,Q\subset\mathbb{R}^n$. Proposition~\ref{unisimp} also invokes the question of when an integral polytope $P\subset\mathbb{R}^n$ can be unimodularly triangulated, which leads us to the following theorem:

\begin{theorem}[\cite{weirdo}] \label{kP} For every integral polytope $P$, there is a constant $k$ such that the dilation $kP$ admits a unimodular triangulation.\end{theorem}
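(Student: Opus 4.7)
The plan is to reduce the statement to the case of individual lattice simplices and then argue by induction on normalized volume. First, triangulate $P$ into lattice $n$-simplices $S_1, \ldots, S_m$, for instance via a pulling triangulation that uses only the vertices of $P$. If for each $S_i$ one can produce a positive integer $k_i$ such that $k_i S_i$ admits a unimodular triangulation whose restriction to each facet depends only on that facet (and not on which ambient simplex is being subdivided), then taking $k = \operatorname{lcm}(k_1, \ldots, k_m)$ and triangulating each $k S_i = (k/k_i)\cdot k_i S_i$ compatibly yields the desired global unimodular triangulation of $kP$.

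For a single lattice $n$-simplex $S$ of normalized volume $d$, I would induct on $d$. The base case $d=1$ is immediate, since $S$ is then already unimodular in the sense of the definition just introduced. For $d>1$, the goal is to exhibit a dilation constant $k_0$ and a lattice point $v \in k_0 S$ lying in the relative interior of some proper face, so that the stellar subdivision of $k_0 S$ at $v$ produces lattice simplices each of strictly smaller normalized volume than $k_0^n d$; one then iterates on each piece, with the inductive hypothesis controlling the dilations needed further down the recursion. The existence of such a usable interior lattice point in some dilation follows from lattice-point counting in the fundamental parallelepiped of $S$: when $d>1$ the parallelepiped contains a nonzero lattice residue, and a sufficiently large dilation pushes a representative of that residue into the relative interior of a face.

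The main obstacle, and the reason the theorem genuinely requires work, is the global \emph{compatibility} of these local unimodular triangulations across shared facets of adjacent simplices $S_i, S_j$ in the original triangulation of $P$. Without such compatibility the local subdivisions fail to assemble into a bona fide triangulation of $kP$. The standard resolution, going back to Knudsen--Mumford--Waterman, is to work with regular (coherent) triangulations defined by a global height function on the lattice points of $kP$, which forces any two adjacent subdivisions to agree along a common face. Simultaneously ensuring this coherence and bounding the dilation factor $k$ uniformly over all simplices in the initial triangulation is the technical heart of \cite{weirdo}, which is why the result is invoked here as a black box rather than reproved in detail.
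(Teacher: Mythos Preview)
The paper does not prove this statement at all: Theorem~\ref{kP} is quoted with a citation to \cite{weirdo} (ultimately the Knudsen--Mumford--Waterman theorem) and then used as a black box in the discussion leading to Theorem~\ref{weakly}. There is therefore no proof in the paper to compare your proposal against, and your closing sentence already acknowledges this.

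As a sketch of the classical argument your outline is broadly on target, and you correctly isolate facet compatibility as the real difficulty. One point is not quite right as written: your induction on normalized volume is not well-founded in the form you state it. Dilating $S$ by $k_0$ multiplies the normalized volume by $k_0^{\,n}$, so saying that each piece of the stellar subdivision of $k_0 S$ has volume strictly less than $k_0^{\,n} d$ does not by itself let you recurse, since that quantity exceeds $d$. The actual KMW argument chooses the subdivision point from the half-open fundamental parallelepiped so that, after dilation by a fixed small factor (in the original treatment, one prime at a time), each resulting simplex has normalized volume strictly less than that of $S$ itself, not merely of $k_0 S$; this is what makes the induction terminate. Since you explicitly defer to the cited reference for the details, this is a caution about the sketch rather than a substantive gap in your write-up.
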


\begin{definition} \label{def: weakly}
Two rational polytopes $P,Q\subset \mathbb{R}^n$ are \emph{weakly $\operatorname{GL}_n(\mathbb{Z})$-equidecomposable} if they can be decomposed into rational polytopes $P_1, P_2, \dots, P_k$ and $Q_1, Q_2, \dots, Q_k$, so that for all $1 \le i \le k$, $P_i$ and $Q_i$ are unimodularly equivalent via $\operatorname{GL}_n{(\mathbb{Z})} \ltimes \mathbb{Q}^n.$\end{definition}

This definition is equivalent to stating that $P$ and $Q$ are weakly $\operatorname{GL}_n(\mathbb{Z})$-equidecomposable if there is a dilation factor $k\in\mathbb{Z}_{>0}$ such that $kP$ and $kQ$ are $\operatorname{GL}_n(\mathbb{Z})$-equidecomposable \cite{weakly}.

\begin{theorem}[\cite{weakly}] \label{poly} If a polytope $P\subset\mathbb{R}^n$ can be unimodularly triangulated, then for all $k \in \mathbb{N}$, $kP$ can also be unimodularly triangulated.\end{theorem}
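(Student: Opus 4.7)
The plan is to refine the natural decomposition $\{kS_1,\ldots,kS_m\}$ of $kP$ coming from the given unimodular triangulation $\mathcal{T}=\{S_1,\ldots,S_m\}$ of $P$ into a finer triangulation of $kP$ by unimodular simplices, chosen so that the pieces agree on common faces. Since each $S_i$ is unimodular, Proposition~\ref{unisimp} gives an affine-unimodular map $\phi_i:\Delta_n\to S_i$, where $\Delta_n$ denotes the standard unimodular $n$-simplex; then $k\phi_i$ identifies $kS_i$ with $k\Delta_n$. So the whole problem reduces to producing a unimodular triangulation of $k\Delta_n$ in a way that can be transported consistently across the various $kS_i$.

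The key technical input is a \emph{canonical} unimodular triangulation of $k\Delta_n$ with two compatibility properties: (i) \emph{symmetry}, meaning invariance under the $S_{n+1}$-action permuting the vertices of $\Delta_n$; and (ii) \emph{heredity}, meaning its restriction to any face of $k\Delta_n$ is the canonical unimodular triangulation of the corresponding lower-dimensional dilated simplex. That $k\Delta_n$ admits at least one unimodular triangulation is classical (Knudsen--Mumford--Waterman); with slightly more care one can arrange the subdivision to be $S_{n+1}$-symmetric and face-compatible (for example, via a regular subdivision induced by an $S_{n+1}$-symmetric lifting function, or via the type-$A$ alcove construction).

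Granting such a triangulation of $k\Delta_n$, the assembly proceeds as follows. For each $i$, define $\mathcal{T}_i^k$ to be the image of the canonical triangulation of $k\Delta_n$ under $k\phi_i$; this is a unimodular triangulation of $kS_i$. Property (i) ensures $\mathcal{T}_i^k$ does not depend on the choice of $\phi_i$, since any two choices differ by precomposition with an element of $S_{n+1}$. If $S_i\cap S_j=F$ is a common face in $\mathcal{T}$, then property (ii) shows that both $\mathcal{T}_i^k|_{kF}$ and $\mathcal{T}_j^k|_{kF}$ arise as the image of the canonical triangulation of $k\Delta_{\dim F}$ under some affine-unimodular map $\Delta_{\dim F}\to F$; applying property (i) in dimension $\dim F$, these two images coincide. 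Thus the $\mathcal{T}_i^k$ glue to a well-defined unimodular triangulation of $kP$.

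The main obstacle I anticipate is the construction of the canonical triangulation of $k\Delta_n$ carrying both symmetry and heredity simultaneously. Existence of \emph{some} unimodular triangulation of $k\Delta_n$ is already nontrivial, and imposing the extra compatibility requires a deliberate combinatorial choice rather than an arbitrary subdivision. Once this is in hand, however, the face-matching and global gluing are essentially formal, using only the simplicial structure of $\mathcal{T}$ together with heredity and symmetry to ensure that the locally-defined $\mathcal{T}_i^k$ patch together into a triangulation of $kP$.
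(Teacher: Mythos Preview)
The paper does not supply its own proof of this theorem; it is quoted from \cite{weakly} as a known result, so there is no in-paper argument to compare against. That said, your proposal has a genuine gap worth flagging.

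Your scheme hinges on the existence of a unimodular triangulation of $k\Delta_n$ that is invariant under the full symmetric group $S_{n+1}$ acting on the vertices of $\Delta_n$. Such a triangulation does not exist in general. Already for $n=3$, $k=2$: the four corner tetrahedra of $2\Delta_3$ are forced (any unimodular simplex containing the vertex $0$ must be $\mathrm{conv}\{0,e_1,e_2,e_3\}$, and similarly at the other three vertices), and what remains is the central octahedron, whose only unimodular triangulations come from choosing one of its three long diagonals. The $S_4$-action permutes these three diagonals via the surjection $S_4\to S_3$, so none of the three resulting triangulations is $S_4$-invariant. Neither of your suggested constructions avoids this: the type-$A$ alcove (Kuhn) triangulation depends on an ordering of the coordinates and is not $S_{n+1}$-symmetric, and an $S_{n+1}$-symmetric lifting function yields a symmetric regular \emph{subdivision}, which here is not a triangulation (the octahedral cell survives).

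The fix is that full symmetry is more than you need. The alcove/Kuhn triangulation of $k\Delta_n$, while not $S_{n+1}$-invariant, \emph{does} satisfy heredity: its restriction to the facet opposite vertex $i$ is the alcove triangulation of $k\Delta_{n-1}$ for the induced ordering on the remaining vertices. So instead of asking for a triangulation independent of the choice of $\phi_i$, fix once and for all a total order on the vertex set of $\mathcal{T}$; this induces an ordering of the vertices of each $S_i$ and hence a specific $\phi_i$. Heredity then guarantees that on a shared face $F=S_i\cap S_j$ both transported triangulations are the alcove triangulation of $kF$ for the \emph{same} induced order on the vertices of $F$, so they match. This global-ordering trick replaces the (unattainable) symmetry requirement and makes the gluing go through.
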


This theorem, combined with Definition~\ref{def: weakly} together yield the following result:  

\begin{theorem}[\cite{weakly}] \label{weakly}
Two integral polytopes $P,Q\subset\mathbb{R}^n$ are weakly $\operatorname{GL}_n(\mathbb{Z})$-equidecomposable if and only if they are Ehrhart-equivalent.
\end{theorem}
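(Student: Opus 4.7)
The plan is to prove the two directions of the biconditional separately. The forward direction (weak equidecomposability $\Rightarrow$ Ehrhart-equivalence) is short and reduces to Theorem~\ref{equivalence}; the reverse direction is the substantial one, combining Theorems~\ref{kP} and \ref{poly} with a counting argument via $h^{*}$-vectors.

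For the forward direction, I assume $P,Q$ are weakly $\operatorname{GL}_n(\mathbb{Z})$-equidecomposable and invoke the equivalent reformulation noted immediately after Definition~\ref{def: weakly}: there exists $k \in \mathbb{Z}_{>0}$ such that $kP$ and $kQ$ are (integrally) $\operatorname{GL}_n(\mathbb{Z})$-equidecomposable. Theorem~\ref{equivalence} then yields $L_{kP}(t) = L_{kQ}(t)$ for all $t \in \mathbb{Z}_{\ge 1}$, i.e., $L_P(kt) = L_Q(kt)$. Since $L_P$ and $L_Q$ are polynomials agreeing on the infinite set $\{k, 2k, 3k, \ldots\}$, they are identically equal, giving Ehrhart-equivalence of $P$ and $Q$.

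For the reverse direction, suppose $P,Q$ are Ehrhart-equivalent. By Theorem~\ref{kP}, choose $k_P, k_Q \in \mathbb{Z}_{>0}$ such that $k_P P$ and $k_Q Q$ admit unimodular triangulations, and let $k = k_P k_Q$. By Theorem~\ref{poly}, both $kP$ and $kQ$ admit unimodular triangulations $T_P$ and $T_Q$. Ehrhart-equivalence transfers under dilation since $L_{kP}(t) = L_P(kt) = L_Q(kt) = L_{kQ}(t)$. Next, I would invoke the Betke--McMullen--Stanley identity: for any integral polytope $R$ admitting a unimodular triangulation $T$, the $h$-vector of the simplicial complex $T$ equals the $h^{*}$-vector (Ehrhart $\delta$-vector) of $R$, which is determined by $L_R$. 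Hence $T_P$ and $T_Q$ have identical $h$-vectors, and since $f$- and $h$-vectors determine one another by an invertible linear transform, they also have identical $f$-vectors. Writing $kP = \bigsqcup_{\sigma \in T_P} \sigma^{\circ}$ and $kQ = \bigsqcup_{\tau \in T_Q} \tau^{\circ}$, I pair the $d$-dimensional relatively open simplices bijectively for each $d$. Since every such $d$-face sits inside some unimodular $n$-simplex of $T_P$ or $T_Q$, its edge vectors from any chosen vertex extend to a $\mathbb{Z}$-basis of $\mathbb{Z}^n$; hence any two paired faces are related by some affine-unimodular $U \in \operatorname{GL}_n(\mathbb{Z}) \ltimes \mathbb{Z}^n$, by a straightforward generalization of Proposition~\ref{unisimp} obtained by completing the partial basis. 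This exhibits $kP$ and $kQ$ as $\operatorname{GL}_n(\mathbb{Z})$-equidecomposable, making $P$ and $Q$ weakly equidecomposable.

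The main obstacle is the Betke--McMullen--Stanley identity $h^{*}(R) = h(T)$ relating the Ehrhart $h^{*}$-vector of $R$ to the $h$-vector of any unimodular triangulation $T$; this is not among the results stated in the excerpt and must be invoked as an external fact. A secondary and easier point is the extension of Proposition~\ref{unisimp} to lower-dimensional unimodular simplices in $\mathbb{R}^n$, which is handled by completing a partial $\mathbb{Z}$-basis of $\mathbb{Z}^n$ to a full basis inside $\operatorname{GL}_n(\mathbb{Z})$.
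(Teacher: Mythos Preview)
The paper does not prove this theorem; it is cited from \cite{weakly} (Haase--McAllister) without proof, the preceding sentence only hinting that Theorems~\ref{kP} and \ref{poly} are the relevant inputs. Your proposal is correct and is precisely the argument in the cited source: the forward direction via Theorem~\ref{equivalence} together with polynomial identity on an infinite arithmetic progression, and the reverse via passing to a common dilation $kP$, $kQ$ admitting unimodular triangulations, matching their $f$-vectors through the Betke--McMullen identity $h(T)=h^{*}(R)$, and then pairing equidimensional open faces using basis completion inside $\operatorname{GL}_n(\mathbb{Z})$. Your flagging of the Betke--McMullen identity as the one external ingredient not established in the present paper is accurate; this is exactly the step the paper delegates to the citation.
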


\begin{corollary}\label{cor1}If $P,Q\subset \mathbb{R}^n$ are two arbitrary Ehrhart-equivalent integral polytopes, then there are infinitely many $k\in\mathbb{Z}_{\ge 1}$ such that $kP$ and $kQ$ are $\operatorname{GL}_n(\mathbb{Z})$-equidecomposable.
\end{corollary}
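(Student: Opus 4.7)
The plan is to combine Theorem~\ref{weakly} with Proposition~\ref{dilations} to produce infinitely many valid dilation factors from a single one. First I would invoke Theorem~\ref{weakly}: since $P$ and $Q$ are Ehrhart-equivalent integral polytopes, they are weakly $\operatorname{GL}_n(\mathbb{Z})$-equidecomposable. By the equivalent reformulation of Definition~\ref{def: weakly} stated immediately after it, this gives at least one integer $k_0 \in \mathbb{Z}_{>0}$ for which $k_0 P$ and $k_0 Q$ are genuinely $\operatorname{GL}_n(\mathbb{Z})$-equidecomposable.

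Next I would bootstrap from this single $k_0$ to infinitely many dilation factors using Proposition~\ref{dilations}. Applied to the pair $(k_0 P, k_0 Q)$, the proposition yields that for every $m \in \mathbb{N}$, the dilations $m(k_0 P) = (m k_0) P$ and $m(k_0 Q) = (m k_0) Q$ remain $\operatorname{GL}_n(\mathbb{Z})$-equidecomposable. As $m$ ranges over $\mathbb{Z}_{\ge 1}$, the factors $m k_0$ form an infinite subset of $\mathbb{Z}_{\ge 1}$, which is the desired conclusion.

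There is essentially no obstacle here; the statement is a direct corollary in the literal sense, since the two ingredients, existence of some working $k_0$ and stability of equidecomposability under further integer dilation, have already been established in the preceding results. The only thing worth being careful about is quoting the reformulation of Definition~\ref{def: weakly} correctly when extracting the integer $k_0$ from weak equidecomposability.
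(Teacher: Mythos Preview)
Your proposal is correct and follows essentially the same approach as the paper: invoke Theorem~\ref{weakly} together with the reformulation of Definition~\ref{def: weakly} to obtain a single dilation factor $k_0$ (the paper calls it $d$), then apply Proposition~\ref{dilations} to produce the infinite family $\{jk_0 : j \in \mathbb{Z}_{\ge 1}\}$.
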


\begin{proof} Consider any two Ehrhart-equivalent integral polytopes $P,Q\subset\mathbb{R}^n$. By Theorem~\ref{weakly} and Definition~\ref{def: weakly}, we conclude that there exists some $d\in\mathbb{Z}_{\ge 1}$ such that $dP$ is $\operatorname{GL}_n(\mathbb{Z})$-equidecomposable to $dQ$. This is turn by Proposition~\ref{dilations} implies that, $jdP$ is $\operatorname{GL}_n(\mathbb{Z})$-equidecomposable to $jdQ$ for all $j\in\mathbb{Z}_{\ge 1}$.\end{proof}

Note that Theorem~\ref{weakly} serves as a weaker version of the converse of Theorem~\ref{equivalence}, since, it implies that, given two arbitrary Ehrhart-equivalent integral polytopes $P,Q\subset\mathbb{R}^n$, there is always some dilation factor $k\in\mathbb{Z}_{\ge 1}$, such that $kP$ and $kQ$ are $\operatorname{GL}_n(\mathbb{Z})$-equidecomposable. 

Now, for any $k\in\mathbb{Z}_{\ge 1}$, define a $\frac{1}{k}$-th unimodular simplex to be a simplex that is unimodular when dilated by a factor of $k$. In particular, an $n$-simplex $S\subset\mathbb{R}^n$ is defined to be $\frac{1}{k}$-th unimodular if the normalized volume of $kS$ is $1$. 

Using Theorem~\ref{kP} and Theorem~\ref{weakly}, Erbe, Haase, and Santos, proved the following theorem for integral $3$-polytopes in $\mathbb{R}^3$:

\begin{theorem}[\cite{3D}] \label{thm: 3D}
If two integral $3$-polytopes $P,Q\subset\mathbb{R}^3$ are Ehrhart-equivalent, then $P$ and $Q$ are $\operatorname{GL}_3(\mathbb{Z})$-equidecomposable into half-unimodular simplices. This is equivalent to stating that if two integral $3$-polytopes $P,Q\subset\mathbb{R}^3$ are Ehrhart-equivalent, then $2P$ and $2Q$ are $\operatorname{GL}_3(\mathbb{Z})$-equidecomposable.\end{theorem}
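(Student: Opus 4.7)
The plan is to combine Theorem~\ref{weakly} with a dimension-3 specific input: for every integral 3-polytope $R$, the dilate $2R$ admits a unimodular triangulation. This is the extra ingredient that forces the dilation factor $k=2$ to suffice in dimension 3, as opposed to the much larger (and unspecified) $k$ guaranteed only in general by Theorem~\ref{kP}.

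First I would establish this dimension-3 input. By White's classification of empty integral 3-simplices (those containing no lattice points besides their vertices), every such simplex is unimodularly equivalent to $\operatorname{conv}\{0, e_1, e_2, p e_1 + q e_2 + e_3\}$ for some coprime integers $p, q$. A direct construction then exhibits a unimodular triangulation of the double of each such empty simplex. Since every integral 3-polytope $R$ admits a fine triangulation whose vertices are exactly the lattice points of $R$ (so every top-dimensional cell is automatically empty), dilating by $2$ and replacing each doubled empty 3-simplex by its explicit unimodular triangulation produces a unimodular triangulation of $2R$.

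Applying this to both $P$ and $Q$, we obtain unimodular triangulations $\mathcal{T}_P$ of $2P$ and $\mathcal{T}_Q$ of $2Q$. Since $P,Q$ are Ehrhart-equivalent, so are $2P,2Q$. We are then reduced to showing that two Ehrhart-equivalent integral polytopes each admitting a unimodular triangulation must be $\operatorname{GL}_3(\mathbb{Z})$-equidecomposable, which is the mechanism already underlying Theorem~\ref{weakly}. Concretely, the relative interiors of the faces of each unimodular triangulation partition $2P$ (respectively $2Q$) as a disjoint union of relatively open unimodular $k$-simplices for $k = 0,1,2,3$, and every two relatively open unimodular $k$-simplices in $\mathbb{R}^3$ lie in the same $\operatorname{GL}_3(\mathbb{Z})\ltimes\mathbb{Z}^3$-orbit (a mild extension of Proposition~\ref{unisimp}). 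Thus the equidecomposition is built piece by piece once the face-count vectors agree. The equivalent formulation "$P$ and $Q$ are $\operatorname{GL}_3(\mathbb{Z})$-equidecomposable into half-unimodular simplices" then follows by scaling back, using Proposition~\ref{dilations}.

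The main obstacle is the empty-simplex doubling step, which is delicate and relies crucially on White's classification being available only in dimension $3$; this is precisely why the argument does not extend to $n \ge 4$. A secondary technical subtlety is ensuring that face counts $f_1$ and $f_2$ of the two unimodular triangulations can be made to match, since Ehrhart-equivalence determines $f_0 = L_P(2) = L_Q(2)$ and $f_3 = \operatorname{normvol}(2P) = \operatorname{normvol}(2Q)$ individually but the Euler relation $f_0 - f_1 + f_2 - f_3 = 1$ fixes only $f_2 - f_1$; this can be resolved by passing to a sufficiently refined common subdivision of $\mathcal{T}_P$ and $\mathcal{T}_Q$ where the remaining degrees of freedom can be aligned, after which the desired $\operatorname{GL}_3(\mathbb{Z})$-equidecomposition is assembled.
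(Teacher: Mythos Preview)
The paper does not prove Theorem~\ref{thm: 3D}; it is quoted from \cite{3D} as a known result, so there is no in-paper argument to compare against directly.

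Your outline has a genuine gap at the step ``dilating by $2$ and replacing each doubled empty $3$-simplex by its explicit unimodular triangulation produces a unimodular triangulation of $2R$.'' Even if one grants (as you assert but do not prove) that $2T$ admits a unimodular triangulation for every empty tetrahedron $T$, assembling these into a global triangulation of $2R$ requires the individual triangulations to restrict compatibly to the shared facets of adjacent dilated tetrahedra. This boundary-compatibility constraint is exactly the obstruction that prevents the standard argument from reaching $k=2$ and is why the Kantor--Sarkaria result for $3$-polytopes lands at $k=4$; you do not address it at all. Separately, your handling of the $f$-vector matching (``passing to a sufficiently refined common subdivision \ldots where the remaining degrees of freedom can be aligned'') is too vague to constitute an argument: exhibiting a refinement that simultaneously equalizes $f_1$ and $f_2$ while keeping all cells unimodular is itself nontrivial, and no mechanism is offered. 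The proof in \cite{3D} sidesteps the first difficulty by not constructing a unimodular triangulation of $2P$ at all; it works directly with $\operatorname{GL}_3(\mathbb{Z})$-equidecompositions of empty tetrahedra into half-unimodular pieces, which need not glue into an honest triangulation, and then matches pieces across $P$ and $Q$ using the Ehrhart data.
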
 

\begin{corollary}\label{cor2} If $P,Q\subset\mathbb{R}^3$ are two arbitrary Ehrhart-equivalent integral $3$-polytopes, then $2jP$ and $2jQ$ are $\operatorname{GL}_3(\mathbb{Z})$-equidecomposable for all $j\in\mathbb{Z}_{\ge 1}$.\end{corollary}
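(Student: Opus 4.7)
The plan is to obtain this as a direct combination of Theorem~\ref{thm: 3D} and Proposition~\ref{dilations}, which together already do all the work. First I would invoke Theorem~\ref{thm: 3D} on the Ehrhart-equivalent integral $3$-polytopes $P$ and $Q$ to conclude that $2P$ and $2Q$ are $\operatorname{GL}_3(\mathbb{Z})$-equidecomposable. This gives a base case at $j=1$ and also serves as the input to a dilation step.

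Next, I would apply Proposition~\ref{dilations} to the pair $(2P, 2Q)$, taking the dilation factor $k = j$ for an arbitrary $j \in \mathbb{Z}_{\ge 1}$. Since that proposition preserves $\operatorname{GL}_n(\mathbb{Z})$-equidecomposability under any positive integer dilation, we obtain that $j(2P) = 2jP$ and $j(2Q) = 2jQ$ are $\operatorname{GL}_3(\mathbb{Z})$-equidecomposable for every $j \in \mathbb{Z}_{\ge 1}$, which is exactly the desired conclusion.

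There really is no substantive obstacle here; the statement is simply the ``infinite family'' version of Theorem~\ref{thm: 3D} obtained by propagating the equidecomposability through integer dilations. The only thing worth being careful about is making the composition of transformations explicit: if $U_i(\mathbf{x}) = A_i \mathbf{x} + \mathbf{c}_i$ witnesses the equidecomposability of $2P$ and $2Q$ on the piece $P_i$, then $V_i(\mathbf{x}) = A_i \mathbf{x} + j\mathbf{c}_i$ is still in $\operatorname{GL}_3(\mathbb{Z}) \ltimes \mathbb{Z}^3$ and carries $jP_i$ to $jQ_i$; but this is precisely the content of the proof of Proposition~\ref{dilations}, so no new verification is required. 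The corollary therefore follows in two lines.
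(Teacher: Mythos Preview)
Your proposal is correct and matches the paper's own proof essentially line for line: invoke Theorem~\ref{thm: 3D} to get that $2P$ and $2Q$ are $\operatorname{GL}_3(\mathbb{Z})$-equidecomposable, then apply Proposition~\ref{dilations} with dilation factor $j$. The extra remark about the explicit form of $V_i$ is already the content of Proposition~\ref{dilations}, as you note, so nothing further is needed.
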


\begin{proof}Let $P, Q \subset\mathbb{R}^3$ be any two integral $3$-polytopes that are Ehrhart-equivalent. By Theorem~\ref{thm: 3D}, $2P$ and $2Q$ are $\operatorname{GL}_3(\mathbb{Z})$-equidecomposable. Thus, by Proposition~\ref{dilations} we conclude that $2jP$ and $2jQ$ are $\operatorname{GL}_3(\mathbb{Z})$-equidecomposable for all $j\in\mathbb{Z}_{\ge 1}$.\end{proof}

\section{Relations between equivalences} \label{main results}

\subsection{$\operatorname{GL}_n(\mathbb{Z})$-equidecomposability of Ehrhart-equivalent integral polytopes}

By Proposition~\ref{unisimp}, one way to prove that two integral polytopes $P, Q \subset \mathbb{R}^n$ are $\operatorname{GL}_n(\mathbb{Z})$-equidecomposability is to analyze at their $n$-dimensional unimodular triangulations, provided they exist. Based on the this approach, the following proposition gives us a way of testing when two Ehrhart-equivalent integral polytopes in $\mathbb{R}^n$ are $\operatorname{GL}_n(\mathbb{Z})$-equidecomposable. 

\begin{proposition}\label{unisimp1} If $P,Q\subset\mathbb{R}^n$ are two Ehrhart-equivalent integral $n$-polytopes such that $$P=\bigsqcup_{i=1}^p P_i\quad \text{and}\quad Q=\bigsqcup_{i=1}^q Q_i,$$ where $P_1,P_2,\dots, P_p$ and $Q_1,Q_2,\dots, Q_q$ are relatively open unimodular $n$-simplices, then $P$ and $Q$ are $\operatorname{GL}_n(\mathbb{Z})$-equidecomposable.\end{proposition}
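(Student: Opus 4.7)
The plan is to combine two basic observations: Ehrhart-equivalence forces $P$ and $Q$ to have equal Euclidean volumes, and any two unimodular $n$-simplices are unimodularly equivalent by Proposition~\ref{unisimp}. Once we match the number of pieces in each decomposition, we can pair the pieces arbitrarily and invoke Proposition~\ref{unisimp} piece-by-piece to produce the affine-unimodular transformations required by the definition of $\operatorname{GL}_n(\mathbb{Z})$-equidecomposability.

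First, I would extract that $p = q$. The leading coefficient of the Ehrhart polynomial is the Euclidean volume of the polytope, so $L_P = L_Q$ implies $\operatorname{vol}(P) = \operatorname{vol}(Q)$. A unimodular $n$-simplex has normalized volume $1$, i.e., Euclidean volume $\tfrac{1}{n!}$, and since the pieces in each decomposition are pairwise disjoint, volume additivity gives $\operatorname{vol}(P) = p/n!$ and $\operatorname{vol}(Q) = q/n!$, forcing $p = q$.

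Next, I would fix any bijection between the index sets, pairing $P_i$ with $Q_i$. For each $i$, Proposition~\ref{unisimp} provides an affine-unimodular $U_i \in \operatorname{GL}_n(\mathbb{Z}) \ltimes \mathbb{Z}^n$ sending the closure of $P_i$ to the closure of $Q_i$. Because $U_i$ is an affine bijection, it preserves relative interiors, so $U_i(P_i) = Q_i$. Combined with $P = \bigsqcup_{i=1}^p P_i$ and $Q = \bigsqcup_{i=1}^p Q_i$, this is exactly the definition of $\operatorname{GL}_n(\mathbb{Z})$-equidecomposability.

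The main subtlety to watch is the status of the boundary: a disjoint union of relatively open $n$-simplices cannot literally equal a closed $n$-polytope, so the hypothesis must be read as tacitly allowing lower-dimensional unimodular simplices covering $\partial P$ and $\partial Q$. These are handled by the same recipe applied dimension-by-dimension, since the analogue of Proposition~\ref{unisimp} for unimodular $k$-simplices ($k<n$) still supplies pairings, and $k$-dimensional volumes again force equal counts at each dimension. The essential content is the interior pairing; the boundary pieces reduce to bookkeeping.
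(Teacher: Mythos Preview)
Your argument is correct and mirrors the paper's: equate volumes via the leading Ehrhart coefficient, deduce $p=q$ from the common $1/n!$ volume of each unimodular piece, then pair pieces arbitrarily and apply Proposition~\ref{unisimp}. Your closing paragraph on the boundary is more scrupulous than the paper (which simply ignores the issue), though the equal counts in each dimension are cleaner to justify by writing $L_P(t)=\sum_k n_k\binom{t-1}{k}$ and using linear independence of the $\binom{t-1}{k}$ than by appealing to $k$-dimensional volumes, which Ehrhart-equivalence does not directly control.
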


\begin{proof}Since $P$ and $Q$ are Ehrhart-equivalent, we have that $$L_P(t)=a_nt^n+\cdots+a_1t+1=L_Q(t),$$ for some $a_1, \ldots, a_n \in \mathbb{Q}$. This implies that $$\mathrm{vol}(P)=a_n=\mathrm{vol}(Q).$$ Now since $P_1,P_2,\dots, P_p$ and $Q_1,Q_2,\dots, Q_q$ are unimodular $n$-simplices in $\mathbb{R}^n$, we have $$\mathrm{vol}(P_j)=\frac{1}{n!}=\mathrm{vol}(Q_k),$$ for all $1\le j\le p$ and $1\le k\le q$. Since $P_1,P_2,\dots, P_p$ and $Q_1,Q_2,\dots, Q_q$ are relatively open simplices with $$P=\bigsqcup_{i=1}^p P_i\quad \text{and}\quad Q=\bigsqcup_{i=1}^q Q_i,$$ we have $$\frac{p}{n!}=\mathrm{vol}(P)=\mathrm{vol}(Q)=\frac{q}{n!},$$ which in turn implies that $p=q$. Let $p=q=r$. Then $$P=\bigsqcup_{i=1}^r P_i\quad \text{and}\quad Q=\bigsqcup_{i=1}^r Q_i.$$ Finally, by Proposition~\ref{unisimp}, for any $1\le j\le r$, $P_j$ is unimodularly equivalent to $Q_k$ for all $1\le k\le r$. As a result, $P_i$ is unimodularly equivalent to $Q_i$ for all $1\le i\le r$. Then for all $1\le i\le r$, there exists affine-unimodular transformations $U_i\in\operatorname{GL}_n(\mathbb{Z})\ltimes \mathbb{Z}^n$, such that $U_i(P_i)=Q_i.$ Thus $P$ and $Q$ are $\operatorname{GL}_n(\mathbb{Z})$-equidecomposable, as desired.\end{proof} 

We know by Corollary~\ref{cor1} that if two integral polytopes $P,Q\subset\mathbb{R}^n$ are Ehrhart-equivalent, then there exists infinitely many $k\in\mathbb{Z}_{\ge 1}$ such that $kP$ and $kQ$ are $\operatorname{GL}_n(\mathbb{Z})$-equidecomposable. We now attempt to find structures which characterize the dilation factors $k$ for which $kP$ and $kQ$ are $\operatorname{GL}_n(\mathbb{Z})$-equidecomposable. Note by Theorem~\ref{thm: 3D} that for $n=1,2,3$, the smallest such values of $k$ are $k=1,1,2$, respectively. We conjecture a generalization that for all $n \in \mathbb{Z}_{\ge 1}$, $(n-1)!$ is one such value of $k$, though it may not necessarily be the smallest. 

\begin{conjecture}\label{ked}
For any $n \in \mathbb{Z}_{\ge 1}$, if $P,Q\subset\mathbb{R}^n$ are two arbitrary Ehrhart-equivalent integral $n$-polytopes, then they are $\operatorname{GL}_n(\mathbb{Z})$-equidecomposable into $\frac{1}{(n-1)!}$th-unimodular simplices. In other words, if $P,Q\subset\mathbb{R}^n$ are two arbitrary Ehrhart-equivalent integral $n$-polytopes, then $((n-1)!)P$ and $((n-1)!)Q$ are $\operatorname{GL}_n(\mathbb{Z})$-equidecomposable. 
\end{conjecture}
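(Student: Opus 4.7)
The plan is to reduce the conjecture to a statement about unimodular triangulations of dilated simplices and then invoke Proposition~\ref{unisimp1}. Specifically, the key sub-claim would be: for every integral $n$-simplex $S\subset\mathbb{R}^n$, the dilation $((n-1)!)S$ admits a unimodular triangulation. Granted this, for any integral $n$-polytope $P$ one triangulates $P=\bigsqcup_i S_i$ into integral $n$-simplices by pulling vertices, dilates to obtain $((n-1)!)P=\bigsqcup_i ((n-1)!)S_i$, and refines each $((n-1)!)S_i$ into unimodular simplices. Since Ehrhart-equivalence of $P$ and $Q$ forces $\mathrm{vol}(P)=\mathrm{vol}(Q)$ and is preserved under dilation, applying this construction to both $P$ and $Q$ yields unimodular decompositions of $((n-1)!)P$ and $((n-1)!)Q$ with equal piece counts, so Proposition~\ref{unisimp1} delivers the $\operatorname{GL}_n(\mathbb{Z})$-equidecomposability.

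For the simplex sub-claim I would attempt induction on the normalized volume $d$ of $S$. After applying an element of $\operatorname{GL}_n(\mathbb{Z})\ltimes\mathbb{Z}^n$, assume $S$ has a vertex at the origin with its edge matrix in Hermite Normal Form; the base case $d=1$ is immediate. The inductive step would exploit the arithmetic structure of $(n-1)!$: it is divisible by every prime $p\le n-1$, so after dilating by $(n-1)!$ many barycenters of proper faces of $S$ become lattice points. One then hopes to slice off corner sub-simplices of strictly smaller normalized volume, apply the inductive hypothesis to each corner, and handle the leftover central region by recursion on lower-dimensional faces, bootstrapping from the $n=1,2,3$ cases addressed by Theorem~\ref{thm: 3D} and \cite{3D}.

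The main obstacle is precisely this inductive step. There is no known uniform bound on the dilation factor required for an integral $n$-simplex to admit a unimodular triangulation, and examples such as Reeve-type simplices show that small dilations can produce no interior lattice points, blocking naive barycentric subdivisions. A fallback strategy is to work pair-by-pair instead of universally: by Theorem~\ref{weakly} there is \emph{some} $k$ making $kP$ and $kQ$ equidecomposable, and one would try to show that the denominators needed in the associated affine-unimodular transformations are controlled by the Ehrhart data in such a way that $k$ may be taken to divide $(n-1)!$. Either approach will require new techniques beyond those assembled in Section~\ref{prelim}, which is consistent with the statement being offered as a conjecture.
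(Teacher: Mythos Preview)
The paper does not prove this statement; it is explicitly left as an open conjecture (see Section~\ref{future}). The only argument the paper gives in connection with Conjecture~\ref{ked} is the short deduction that Conjecture~\ref{ked} implies Conjecture~\ref{gen}, together with computational evidence for $n=4$ in Remark~\ref{true}. So there is no ``paper's own proof'' to compare your proposal against.

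That said, your outline is a reasonable plan of attack, and you have correctly located the genuine gap yourself: the sub-claim that $((n-1)!)S$ admits a unimodular triangulation for every integral $n$-simplex $S$ is open and is essentially equivalent in difficulty to the conjecture. Theorem~\ref{kP} only guarantees \emph{some} dilation factor depending on $P$, and no uniform bound of the form $(n-1)!$ is known for $n\ge 4$. Your proposed induction on normalized volume via Hermite Normal Form and corner-slicing is the natural first thing to try, but as you note, Reeve-type simplices obstruct naive barycentric arguments, and the ``central region'' after slicing corners need not have smaller normalized volume or decompose cleanly. The fallback of controlling the denominators arising from Theorem~\ref{weakly} is also plausible in spirit, but the proof of that theorem goes through an unspecified $k$ coming from Theorem~\ref{kP}, so extracting divisibility by $(n-1)!$ would again require new input. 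In short: your proposal is an honest sketch of a strategy with its obstacle clearly flagged, which is the appropriate status for a conjecture the paper itself does not resolve.
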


While Conjecture~\ref{ked} does not address the full characterization of all dilation factors $k$, it provides one such value $k$ for all $n\in\mathbb{Z}_{\ge 1}$. From Corollary~\ref{cor1}, we know that there are infinitely many values of $k$, and for $n=1$ and $n=2$, we know that any $k\in\mathbb{Z}_{\ge 1}$ works. We attempt to generalize an infinite class of such values $k$ for $n \ge 3$.  

\begin{conjecture}\label{gen} For any $n\in\mathbb{Z}_{>3}$, if $P,Q\subset\mathbb{R}^n$ are two arbitrary Ehrhart-equivalent integral $n$-polytopes, then $(j(n-1)!)P$ and $(j(n-1)!)Q$ are $\operatorname{GL}_n(\mathbb{Z})$-equidecomposable for all $j\in\mathbb{Z}_{\ge 1}$.\end{conjecture}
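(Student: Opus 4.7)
The plan is to derive Conjecture~\ref{gen} from Conjecture~\ref{ked} in essentially one step, exactly mirroring the way Corollary~\ref{cor2} is deduced from Theorem~\ref{thm: 3D}. Given two Ehrhart-equivalent integral $n$-polytopes $P, Q \subset \mathbb{R}^n$, Conjecture~\ref{ked} provides a single ``base'' dilation factor, namely $(n-1)!$, for which $((n-1)!)P$ and $((n-1)!)Q$ are $\operatorname{GL}_n(\mathbb{Z})$-equidecomposable. Proposition~\ref{dilations} then promotes that one instance of equidecomposability to the whole arithmetic progression $j(n-1)!$ automatically. No new machinery beyond these two ingredients is required.

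Concretely, the steps I would carry out are: (i) invoke Conjecture~\ref{ked} to obtain relatively open simplices $R_1, \dots, R_r$ and $S_1, \dots, S_r$ triangulating $((n-1)!)P$ and $((n-1)!)Q$ respectively, together with affine-unimodular transformations $U_i \in \operatorname{GL}_n(\mathbb{Z}) \ltimes \mathbb{Z}^n$ with $U_i(R_i) = S_i$; (ii) fix $j \in \mathbb{Z}_{\ge 1}$ and apply Proposition~\ref{dilations} with the pair $((n-1)!)P, ((n-1)!)Q$ in place of $P, Q$ and with dilation factor $j$, which scales the $R_i, S_i$ uniformly and produces new affine-unimodular transformations $V_i(\mathbf{v}) = A_i \mathbf{v} + j \mathbf{c_i}$ realizing a $\operatorname{GL}_n(\mathbb{Z})$-equidecomposition of $(j(n-1)!)P$ and $(j(n-1)!)Q$; (iii) observe that $j$ was arbitrary, completing the argument.

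The only genuine obstacle is that the argument rests on Conjecture~\ref{ked}, which is itself unproven for $n \ge 4$. The natural route to Conjecture~\ref{ked} would adapt the strategy of Erbe--Haase--Santos for $n=3$: use Theorem~\ref{kP} to find some dilation $d$ for which both $dP$ and $dQ$ admit unimodular triangulations, apply Proposition~\ref{unisimp1} to conclude that $dP$ and $dQ$ are $\operatorname{GL}_n(\mathbb{Z})$-equidecomposable, and then show $d$ can be chosen to divide $(n-1)!$ so that Proposition~\ref{dilations} pushes the equidecomposition back down to the exact factor $(n-1)!$. Controlling that denominator $d$ in a dimension-uniform way is the real difficulty, since Theorem~\ref{kP} is purely existential and gives no quantitative bound tying $d$ to $(n-1)!$. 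Modulo that step, the deduction of Conjecture~\ref{gen} itself is entirely formal.
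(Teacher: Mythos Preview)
Your proposal is correct and matches the paper's own argument exactly: invoke Conjecture~\ref{ked} to get $\operatorname{GL}_n(\mathbb{Z})$-equidecomposability of $((n-1)!)P$ and $((n-1)!)Q$, then apply Proposition~\ref{dilations} with dilation factor $j$. Your additional discussion of the obstacles to proving Conjecture~\ref{ked} itself is accurate commentary but not part of the deduction, which is indeed entirely formal.
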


\begin{proof}[Proof of Conjecture~\ref{gen} from Conjecture~\ref{ked}] Let $P,Q\subset\mathbb{R}^n$ be two integral $n$-polytopes that are Ehrhart-equivalent. By Conjecture~\ref{ked}, $((n-1)!)P$ and $((n-1)!)Q$ are $\operatorname{GL}_n(\mathbb{Z})$-equidecomposable. Thus, by Proposition~\ref{dilations} we conclude that $(j(n-1)!)P$ and $(j(n-1)!)Q$ are $\operatorname{GL}_n(\mathbb{Z})$-equidecomposable for all $j\in\mathbb{Z}_{\ge 1}$.\end{proof}

In the following subsections, we build the necessary algorithms and background for addressing Conjecture~\ref{ked} for the specific case of $n=4$.

\subsection{Algorithm to generate Ehrhart-equivalent integral $n$-polytopes in $\mathbb{R}^n$}
We first describe an algorithm to generate Ehrhart-equivalent integral polytopes. We take a vertex set of arbitrary length and map the corresponding integral polytope to its Ehrhart polynomial. The hash collisions will give us different polytopes with the same Ehrhart polynomial, and hence they will be Ehrhart-equivalent.

\textbf{The algorithm is as follows:}
\begin{itemize}
\item Select an arbitrary finite vertex set $V= \{\mathbf{v_1},\mathbf{v_2},\dots,\mathbf{v_{m+1}}\}\subset\mathbb{Z}^n,$ such that $m\ge n$.

\item Obtain the integral $n$-polytope $P\subset\mathbb{R}^n$, corresponding to the vertex set $V$, defined by $$P=\mathrm{conv}(V),$$ using the SageMath code $$\texttt{sage: P = Polyhedron(vertices=[v\_1,v\_2,...,v\_(m+1)])}.$$  

\item Find the Ehrhart polynomial of $P$, denoted by $L_P(t)$, using the SageMath code $$\texttt{sage: P.ehrhart\_polynomial()}.$$

\item Make minor modifications on the vector set $V$ to obtain new vertex sets, then finding ``hash collisions'' among the vertex sets such that the corresponding polytope of a new vertex set $U$, given by $Q= \mathrm{conv}(U)(\neq P)$, has the same Ehrhart polynomial as that of $P$, using the SageMath code $$\texttt{sage: Q.ehrhart\_polynomial()}.$$
\end{itemize}

We implement this algorithm for $n=4$, and obtain the following sets of examples of Ehrhart-equivalent integral polytopes (described in a SageMath ready format). 

\textbf{Example 1. Simplices with Ehrhart polynomial: $\frac{1}{8}t^4 + \frac{3}{4}t^3 +\frac{15}{8}t^2 + \frac{9}{4}t + 1$}

\begin{center}
\begin{tabular}{| m{16 cm} |} 
\hline
\texttt{P\_1 = Polyhedron(vertices=[(0,0,0,0),(1,1,1,1),(1,1,2,2),(3,2,1,2),(2,3,1,3)])}\\ 
\hline
\texttt{P\_2 = Polyhedron(vertices=[(0,0,0,0),(2,1,1,1),(1,1,2,3),(2,2,1,2),(2,2,1,3)])}\\ 
\hline
\texttt{P\_3 = Polyhedron(vertices=[(0,0,0,1),(3,2,1,1),(1,1,2,3),(2,2,1,2),(2,2,1,3)])}\\ 
\hline
\texttt{P\_4 = Polyhedron(vertices=[(0,0,0,1),(3,2,2,1),(1,1,2,3),(2,2,1,2),(2,2,1,3)])}\\
\hline
\texttt{P\_5 = Polyhedron(vertices=[(0,0,0,1),(3,2,2,2),(1,1,2,3),(2,2,1,2),(2,2,1,3)])}\\
\hline
\texttt{P\_6 = Polyhedron(vertices=[(0,0,0,1),(3,2,2,3),(1,1,2,3),(2,2,1,2),(2,2,1,3)])}\\
\hline
\texttt{P\_7 = Polyhedron(vertices=[(0,0,0,1),(3,2,2,0),(1,1,2,3),(2,2,1,2),(2,2,1,3)])}\\
\hline
\texttt{P\_8 = Polyhedron(vertices=[(0,0,0,0),(2,2,2,1),(3,2,2,3),(1,1,1,2),(0,3,2,3)])}\\
\hline
\texttt{P\_9 = Polyhedron(vertices=[(3,3,3,3),(4,4,3,3),(7,6,9,10),(1,2,1,1),(1,1,2,1)])}\\
\hline
\end{tabular}
\end{center}
\vspace{2 mm}

\textbf{Example 2. Simplices with Ehrhart polynomial: $\frac{1}{24}t^4 + \frac{5}{12}t^3 + \frac{35}{24}t^2 + \frac{25}{12}t + 1$}
\begin{center}
\begin{tabular}{| m{16 cm} |} 
\hline
\texttt{P\_1 = Polyhedron(vertices=[(1,1,1,1),(2,3,1,1),(2,1,2,3),(2,2,1,2),(2,2,1,3)])}\\
\hline 
\texttt{P\_2 = Polyhedron(vertices=[(1,1,1,1),(2,3,1,2),(2,1,2,3),(2,2,1,2),(2,2,1,3)])]}\\
\hline
\texttt{P\_3 = Polyhedron(vertices=[(1,1,1,1),(2,3,1,1000),(2,1,2,3),(2,2,1,2),
(2,2,1,3)])}\\
\hline
\texttt{P\_4 = Polyhedron(vertices=[(0,0,0,0),(1,2,2,1),(3,1,2,3),(1,1,1,2),(0,3,2,3)])}\\
\hline
\texttt{P\_5 = Polyhedron(vertices=[(0,0,0,0),(1,3,2,1),(3,2,2,3),(1,1,1,2),(0,3,2,3)])}\\
\hline
\texttt{P\_6 = Polyhedron(vertices=[(0,0,0,0),(1,2,2,4),(3,2,2,3),(1,1,1,2),(0,3,2,3)])}\\
\hline
\texttt{P\_7 = Polyhedron(vertices=[(0,0,0,0),(1,0,0,0),(0,1,0,0),(0,0,1,0),(0,0,0,1)])}\\
\hline
\texttt{P\_8 = Polyhedron(vertices=[(5,7,6,7),(6,5,7,5),(5,7,6,6),(6,5,6,7),(6,6,6,6)])}\\
\hline
\texttt{P\_9 = Polyhedron(vertices=[(5,7,6,7),(6,7,7,5),(5,7,6,6),(6,5,6,7),(6,6,6,6)])}\\
\hline
\texttt{P\_10 = Polyhedron(vertices=[(6,8,7,8),(7,8,8,6),(6,8,7,7),(7,6,7,8),(7,7,7,7)])}\\
\hline
\texttt{P\_11 = Polyhedron(vertices=[(10,12,11,12),(11,12,12,10),(10,12,11,11),
(11,10,11,12),(11,11,11,11)])}\\
\hline 
\texttt{P\_12 = Polyhedron(vertices=[(1010,1012,1011,1012),(1011,1012,1012,1010),
(1010,1012,1011,1011),(1011,1010,1011,1012),(1011,1011,1011,1011)])}\\
\hline 
\texttt{P\_13 = Polyhedron(vertices=[(0,0,0,0),(1,0,0,0),(1,1,1,1),(0,0,1,0),(0,1,0,0)])}\\
\hline
\texttt{P\_14 = Polyhedron(vertices=[(3,1,1,1),(3,2,2,2),(2,2,2,2),(1,2,1,1),(1,1,2,1)])}\\
\hline 
\texttt{P\_15 = Polyhedron(vertices=[(3,3,1,1),(3,2,2,2),(2,2,2,2),(1,2,1,1),(1,1,2,1)])}\\
\hline 
\texttt{P\_16 = Polyhedron(vertices=[(3,3,3,3),(3,2,2,2),(2,2,2,2),(1,2,1,1),(1,1,2,1)])}\\
\hline 
\end{tabular}
\end{center}
\begin{center}
\begin{tabular}{| m{16 cm} |} 
\hline
\texttt{P\_17 = Polyhedron(vertices=[(3,3,3,3),(4,3,3,3),(2,2,2,2),(1,2,1,1),(1,1,2,1)])}\\
\hline 
\texttt{P\_18 = Polyhedron(vertices=[(3,3,3,3),(4,3,3,3),(3,2,2,2),(1,2,1,1),(1,1,2,1)])}\\
\hline 
\texttt{P\_19 = Polyhedron(vertices=[(3,3,3,3),(4,3,3,3),(3,3,2,2),(1,2,1,1),(1,1,2,1)])}\\
\hline 
\texttt{P\_20 = Polyhedron(vertices=[(3,3,3,3),(4,4,3,3),(4,3,3,2),(1,2,1,1),(1,1,2,1)])}\\
\hline 
\texttt{P\_21 = Polyhedron(vertices=[(3,3,3,3),(4,4,3,3),(7,6,3,2),(1,2,1,1),(1,1,2,1)])}\\
\hline 
\texttt{P\_22 = Polyhedron(vertices=[(1,1,1,1),(3,2,2,2),(2,2,2,2),(1,2,1,1),(1,1,2,1)])}\\
\hline
\end{tabular}
\end{center}
\vspace{2 mm}

\textbf{Example 3. Simplices with Ehrhart polynomial: $\frac{1}{6}t^4 + t^3 + \frac{7}{3}t^2 + \frac{5}{2}t + 1$}
\begin{center}
\begin{tabular}{| m{16 cm} |} 
\hline
\texttt{P\_1 = Polyhedron(vertices=[(0,0,0,0),(1,2,2,1),(3,2,2,3),(1,1,1,2),(0,3,2,3)])}\\
\hline
\texttt{P\_2 = Polyhedron(vertices=[(0,0,0,0),(1,2,2,1),(3,2,2,3),(1,1,1,2),(1,3,2,3)])}\\
\hline
\texttt{P\_3 = Polyhedron(vertices=[(0,0,0,0),(1,2,2,1),(3,2,2,3),(1,1,1,2),(2,3,2,3)])}\\
\hline
\texttt{P\_4 = Polyhedron(vertices=[(0,0,0,0),(1,2,2,1),(3,2,2,3),(1,1,1,2),(3,3,2,3)])}\\
\hline
\texttt{P\_5 = Polyhedron(vertices=[(0,0,0,0),(1,2,2,1),(3,2,2,3),(1,1,1,2),
(1000,3,2,3)])}\\
\hline
\texttt{P\_6 = Polyhedron(vertices=[(0,0,0,0),(1,2,2,1),(3,2,2,3),(1,1,1,2),(0,3,4,3)])}\\
\hline
\texttt{P\_7 = Polyhedron(vertices=[(0,0,0,0),(1,2,2,1),(3,2,2,3),(1,1,1,2),(0,3,2,1)])}\\
\hline
\texttt{P\_8 = Polyhedron(vertices=[(0,0,0,0),(1,2,2,1),(3,2,2,3),(1,1,1,2),(0,3,2,2)])}\\
\hline
\texttt{P\_9 = Polyhedron(vertices=[(0,0,0,0),(1,2,2,1),(3,2,2,3),(1,1,1,2),(0,3,2,4)])}\\
\hline
\texttt{P\_10 = Polyhedron(vertices=[(0,0,0,0),(1,2,2,1),(3,2,2,3),(1,1,1,2),
(0,3,2,1000)])}\\
\hline
\end{tabular}
\end{center}
\vspace{2 mm}

\textbf{Example 4. Simplices with Ehrhart polynomial: $\frac{1}{8}t^4 + \frac{5}{12}t^3 + \frac{11}{8}t^2 + \frac{25}{12}t + 1$}
\begin{center}
\begin{tabular}{| m{16 cm} |} 
\hline
\texttt{P\_1 = Polyhedron(vertices=[(0,0,0,1),(1,0,0,0),(1,1,1,1),(0,1,0,0),(0,0,1,0)])}\\
\hline
\texttt{P\_2 = Polyhedron(vertices=[(2,2,2,1),(1,0,0,0),(1,1,1,1),(0,1,0,0),(0,0,1,0)])}\\
\hline
\texttt{P\_3 = Polyhedron(vertices=[(3,3,3,1),(3,2,2,2),(2,2,2,2),(1,2,1,1),(1,1,2,1)])}\\
\hline
\texttt{P\_4 = Polyhedron(vertices=[(3,3,3,3),(4,3,3,3),(3,3,3,2),(1,2,1,1),(1,1,2,1)])}\\
\hline
\end{tabular}
\end{center}
\vspace{2 mm}

\textbf{Example 5. Simplices with Ehrhart polynomial: $8t^4 + \frac{38}{3}t^3 + 6t^2 + \frac{7}{3}t + 1$}
\begin{center}
\begin{tabular}{| m{16 cm} |} 
\hline
\texttt{P\_1 = Polyhedron(vertices=[(4,3,2,1),(6,7,5,8),(10,9,12,11),(13,14,15,16),
(18,19,17,20)])}\\
\hline
\texttt{P\_2 = Polyhedron(vertices=[(5,4,3,2),(7,8,6,9),(11,10,13,12),(14,15,16,17),
(19,20,18,21)])}\\
\hline
\texttt{P\_3 = Polyhedron(vertices=[(6,4,3,2),(8,8,6,9),(12,10,13,12),(15,15,16,17),
(20,20,18,21)])}\\
\hline
\end{tabular}
\end{center}
\vspace{2 mm}

\textbf{Example 6. Polytopes with Ehrhart polynomial: $\frac{1}{6}t^4 + \frac{2}{3}t^3 + \frac{11}{6}t^2 + \frac{7}{3}t + 1$}
\begin{center}
\begin{tabular}{| m{16 cm} |} 
\hline
\texttt{P\_1 = Polyhedron(vertices=[(0,0,0,0),(0,0,0,1),(0,0,1,0),(0,1,0,0),(1,0,0,0),
(1,1,1,1)])}\\
\hline
\texttt{P\_2 = Polyhedron(vertices=[(0,0,0,0),(1,2,2,4),(3,2,2,3),(1,1,1,2),(0,3,2,3),
(5,8,7,12)])}\\
\hline
\end{tabular}
\end{center}
In the next subsection, we demonstrate a SageMath code to determine whether two arbitrary integral $n$-simplices $S,T\subset\mathbb{R}^n$ are unimodularly equivalent.

\subsection{Algorithm to check unimodular equivalence}
To build an algorithm that tests for unimodular equivalence, we first introduce some preliminary definitions and properties.
\begin{definition} For any $n$-simplex $S\subset\mathbb{R}^n$, where $$S= \mathrm{conv}\left\{(s_{11},s_{21},\dots,s_{n1}),(s_{12},s_{22},\dots,s_{n2}),(s_{13},s_{23},\dots,s_{n3}),\dots,(s_{1,n+1},s_{2,n+1},\dots,s_{n,n+1})\right\},$$ the \emph{definition matrix} of $S$, denoted by $D_S$, is $$D_S= \begin{pmatrix}
s_{11} & s_{12} & \hdots & s_{1,n+1} \\
\vdots & \vdots & \ddots & \vdots \\
s_{n1} & s_{n2} & \hdots & s_{n,n+1} \\
1 & 1 & \hdots & 1 
\end{pmatrix}.$$\end{definition}

\begin{definition} For any matrix $M$, the \emph{column set} of $M$, denoted by $\sigma_M$, is the set of all column vectors of $M$. If $$M= \begin{pmatrix}\mathbf{c_1} & \mathbf{c_2} & \dots & \mathbf{c_m}\end{pmatrix},$$ then $$\sigma_M=\{\mathbf{c_1},\mathbf{c_2},\dots,\mathbf{c_m}\}.$$\end{definition}

Consider any two arbitrary integral $n$-simplices $S,T\subset\mathbb{R}^n$, where $$S= \mathrm{conv}\{(s_{11},s_{21},\hdots,s_{n1}),(s_{12},s_{22},\hdots,s_{n2}),\hdots,(s_{1(n + 1)},s_{2(n + 1)},\hdots,s_{n(n + 1)})\}$$ and  $$T= \mathrm{conv}\{(t_{11},t_{21},\hdots,t_{n1}),(t_{12},t_{22},\hdots,t_{n2}), \hdots ,(t_{1(n + 1)},t_{2(n + 1)}, \hdots,t_{n(n + 1)})\}.$$ 

For all $1 \le i \le n+1$, let $\mathbf{s_{i}} =  (s_{1i}, s_{2i}, \hdots, s_{ni})$, and define $\mathbf{t_{i}}$ similarly\footnote{For the rest of the paper we assume that any arbitrary $n$-simplex $P\subset\mathbb{R}^n$ will be defined in the same way as $S$ and $T$ are defined here, unless otherwise stated. $\mathbf{p_i}$ will also be defined similarly to $\mathbf{s_i}$ and $\mathbf{t_i}$.}. Now, to prove that $S$ and $T$ are unimodularly equivalent, it is enough to show that there exists some $B\in \operatorname{GL}_n(\mathbb{Z})$, $\mathbf{c}\in\mathbb{Z}^n$, and some bijection $f:\{1,2,\hdots, n + 1\}\to \{1,2,\hdots, n + 1\}$, such that 
\begin{align}
    B\mathbf{s_i} + \mathbf{c} = \mathbf{t_{f(i)}}, \label{eq*}
\end{align}
for all $1 \leq i \leq n + 1$. 

For any matrix $M$, let $\mathrm{Sym}(\sigma_M)$ denote the symmetric group of $\sigma_M$. 
\begin{rem}\label{sym} Showing \eqref{eq*} is equivalent to showing that there exists some $A\in\mathbb{M}_{n + 1}(\mathbb{Z})$ and $\phi\in \mathrm{Sym}(\sigma_{D_T})$, with $$A = \begin{pmatrix} B & \mathbf{c} \\ \mathbf{0} & 1 \end{pmatrix},$$ where $B \in \operatorname{GL}_n(\mathbb{Z})$, $\mathbf{c} \in \mathbb{Z}^n$, such that $$AS^* = T^*,$$ where 
\begin{align}
S^*= \begin{pmatrix} \mathbf{s_1} & \mathbf{s_2} & \dots & \mathbf{s_{n + 1}} \\ 1 & 1 & \dots & 1\end{pmatrix}\quad \text{and} \quad T^*= \begin{pmatrix}\phi\dbinom{\mathbf{t_1}}{1} & \phi\dbinom{\mathbf{t_2}}{1} & \dots & \phi\dbinom{\mathbf{t_{n + 1}}}{1}\end{pmatrix}. \label{eq**}
\end{align}
Having this, \eqref{eq*} is equivalent to showing that there exists some $A\in\operatorname{GL}_{n + 1}(\mathbb{Z})$ and $\phi\in \mathrm{Sym}(\sigma_{D_T})$ such that $$AS^* = T^*,$$ where $S^*$ and $T^*$ are defined as in \eqref{eq**}.
\end{rem}

This leads us to the following proposition:

\begin{proposition}\label{fordir} Let $S$ and $T$ be two arbitrary integral $n$-simplices of equal volume contained in $\mathbb{R}^n$. If there exists some $A\in\mathbb{M}_{n + 1}(\mathbb{Z})$ and $\phi\in \mathrm{Sym}(\sigma_{D_T})$, such that $$AS^* = T^*,$$ where $S^*$ and $T^*$ are defined as in \eqref{eq**}, then $S$ and $T$ are unimodularly equivalent.
\end{proposition}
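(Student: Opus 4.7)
The plan is to upgrade the hypothesis ``$A\in\mathbb{M}_{n+1}(\mathbb{Z})$'' to ``$A\in\operatorname{GL}_{n+1}(\mathbb{Z})$ of the block form described in Remark~\ref{sym}'' and then invoke that remark. There are two things to verify: first, that the bottom row of $A$ is forced to be $(\mathbf{0},1)$, so that $A=\begin{pmatrix} B & \mathbf{c}\\ \mathbf{0} & 1\end{pmatrix}$ with $B\in\mathbb{M}_n(\mathbb{Z})$ and $\mathbf{c}\in\mathbb{Z}^n$; and second, that $\det B=\pm 1$, which is where the equal-volume hypothesis must be used.

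For the first step, I would compare bottom rows on the two sides of $AS^*=T^*$. The bottom row of $S^*$ consists entirely of $1$'s (by construction), and so does the bottom row of $T^*$ (since $T^*$ is obtained from $D_T$ by the column-permutation $\phi$, which leaves the all-ones row fixed). Thus if $(a_{n+1,1},\dots,a_{n+1,n},a_{n+1,n+1})$ is the bottom row of $A$, then the affine functional $\ell(\mathbf{x})=\sum_{i=1}^n a_{n+1,i}x_i+a_{n+1,n+1}$ evaluates to $1$ at each vertex $\mathbf{s_1},\dots,\mathbf{s_{n+1}}$. Since $S$ is a full-dimensional $n$-simplex in $\mathbb{R}^n$, these $n+1$ vertices are affinely independent, and an affine functional on $\mathbb{R}^n$ is determined by its values at any $n+1$ affinely independent points. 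Hence $\ell\equiv 1$ identically, which forces $a_{n+1,1}=\cdots=a_{n+1,n}=0$ and $a_{n+1,n+1}=1$.

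For the second step, taking determinants in $AS^*=T^*$ gives $|\det A|\cdot|\det S^*|=|\det T^*|$. The standard formula for the volume of a simplex via column operations yields
\begin{equation*}
|\det S^*|=n!\cdot\mathrm{vol}(S),\qquad |\det T^*|=|\det D_T|=n!\cdot\mathrm{vol}(T),
\end{equation*}
where the first equality for $T^*$ holds because permuting columns only changes the determinant by a sign. Since $\mathrm{vol}(S)=\mathrm{vol}(T)>0$ by hypothesis, we conclude $|\det A|=1$. But $\det A=\det B$ by the block form established above, so $\det B=\pm 1$ and $B\in\operatorname{GL}_n(\mathbb{Z})$.

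Having shown that $A$ has exactly the form required in Remark~\ref{sym}, with $B\in\operatorname{GL}_n(\mathbb{Z})$ and $\mathbf{c}\in\mathbb{Z}^n$, the equation $AS^*=T^*$ unpacks (column by column) to $B\mathbf{s_i}+\mathbf{c}=\mathbf{t_{f(i)}}$ for every $i$, where $f$ is the bijection of vertex indices induced by the column permutation $\phi$. Then $U(\mathbf{v})=B\mathbf{v}+\mathbf{c}$ is an affine-unimodular transformation in $\operatorname{GL}_n(\mathbb{Z})\ltimes\mathbb{Z}^n$ sending the vertex set of $S$ bijectively onto the vertex set of $T$, hence $U(S)=T$, and $S$ and $T$ are unimodularly equivalent by Definition~\ref{uni eqi}. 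I expect no real obstacle here; the only subtlety is recognizing that the equal-volume assumption is exactly what is needed to upgrade from integrality of $A$ to its invertibility over $\mathbb{Z}$—without it, the integer matrix $A$ could have determinant of larger magnitude, and the map would not be a bijection on lattice points.
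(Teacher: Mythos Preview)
Your proof is correct and follows essentially the same approach as the paper's: both compare the bottom rows of $AS^*$ and $T^*$ to force the last row of $A$ to be $(\mathbf{0},1)$, then use the equal-volume hypothesis together with $|\det S^*|=n!\,\mathrm{vol}(S)$ to conclude $\det A=\pm 1$, and finally invoke Remark~\ref{sym}. Your justification of the first step via affine independence of the vertices is slightly more explicit than the paper's, but the argument is the same.
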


\begin{proof}
Suppose that there exists some $A\in\mathbb{M}_{n + 1}(\mathbb{Z})$ and $\phi\in \mathrm{Sym}(\sigma_{D_T})$, such that $$AS^* = T^*,$$ where $S^*$ and $T^*$ are defined as in \eqref{eq**}, and that $$A=\begin{pmatrix}B & \mathbf{c}\\ \mathbf{v} & u\end{pmatrix},$$ where $B\in\mathbb{M}_n(\mathbb{Z}), \mathbf{c}\in\mathbb{Z}^n, \mathbf{v}\in\mathbb{M}_{1,n}(\mathbb{Z})$ and $u\in\mathbb{Z}$. 

Then for every $1\le i\le n+1$, we have
$$A\binom{\mathbf{s_i}}{1}=\begin{pmatrix}
   B & \mathbf{c} \\
   \mathbf{v} & u
   \end{pmatrix} 
   \binom{\mathbf{s_i}}{1} = \binom{B\mathbf{s_i} + c}{\mathbf{v}\mathbf{s_i} + u}=\dbinom{\mathbf{\phi(t_i)}}{1}.$$

Thus, for all $1\le i\le n+1$, we have $$\mathbf{vs_i}+u=1,$$ which implies that $\mathbf{v}=0$ and $u=1$. Since $S$ and $T$ have the same volume, this implies that $$\frac{1}{n!}|\det S^*| = \mathrm{vol}(S) = \mathrm{vol}(T) = \frac{1}{n!}|\det T^*|$$ and so $$|\det S^*|=|\det T^*|\implies \det S^* = \pm \det T^*.$$ Thus, $\det A=\pm 1$. By Remark~\ref{sym} we therefore conclude that $S$ and $T$ are unimodularly equivalent.
\end{proof}

\begin{proposition}\label{revdir} Let $S$ and $T$ be two arbitrary integral $n$-simplices in $\mathbb{R}^n$. If $S$ and $T$ are unimodularly equivalent, then there exists some $A\in\mathrm{GL}_{n + 1}(\mathbb{Z})$ and $\phi\in \mathrm{Sym}(\sigma_{D_T})$, such that $$AS^* = T^*,$$ where $S^*$ and $T^*$ are defined as in \eqref{eq**}.
\end{proposition}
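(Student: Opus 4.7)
The plan is to reverse-engineer the construction that appeared in the proof of Proposition~\ref{fordir}. Since $S$ and $T$ are unimodularly equivalent, Definition~\ref{uni eqi} supplies an affine-unimodular transformation $U(\mathbf{v}) = B\mathbf{v} + \mathbf{c}$, with $B \in \operatorname{GL}_n(\mathbb{Z})$ and $\mathbf{c} \in \mathbb{Z}^n$, satisfying $U(S) = T$. My goal is to promote the pair $(B, \mathbf{c})$ to a single $(n+1) \times (n+1)$ matrix $A$ that acts on the augmented column representation $S^{*}$.

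The key observation is that any affine map sending a simplex to a simplex must send vertices to vertices bijectively, because vertices are exactly the extreme points of a simplex and affine-unimodular transformations preserve extreme points. Therefore, applying $U$ to the vertex set $\{\mathbf{s_1},\dots,\mathbf{s_{n+1}}\}$ of $S$ produces the vertex set $\{\mathbf{t_1},\dots,\mathbf{t_{n+1}}\}$ of $T$, and there is a bijection $f:\{1,\dots,n+1\}\to\{1,\dots,n+1\}$ with $B\mathbf{s_i} + \mathbf{c} = \mathbf{t_{f(i)}}$ for every $i$. I would record this permutation at the column-vector level by letting $\phi \in \mathrm{Sym}(\sigma_{D_T})$ be the permutation sending $\binom{\mathbf{t_i}}{1}$ to $\binom{\mathbf{t_{f(i)}}}{1}$, which is precisely the permutation required to build $T^*$ as in \eqref{eq**}.

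Next I would define
\[
A \;=\; \begin{pmatrix} B & \mathbf{c} \\ \mathbf{0} & 1 \end{pmatrix} \in \mathbb{M}_{n+1}(\mathbb{Z}).
\]
Cofactor expansion along the last row gives $\det A = \det B \in \{\pm 1\}$, so $A \in \operatorname{GL}_{n+1}(\mathbb{Z})$. A direct block-multiplication check then shows
\[
A\binom{\mathbf{s_i}}{1} \;=\; \binom{B\mathbf{s_i} + \mathbf{c}}{1} \;=\; \binom{\mathbf{t_{f(i)}}}{1} \;=\; \phi\!\binom{\mathbf{t_i}}{1},
\]
for every $i$, which is exactly the column-by-column identity $AS^{*}=T^{*}$ demanded by \eqref{eq**}.

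I do not anticipate a genuine obstacle here; the proposition is essentially the converse of Proposition~\ref{fordir} and unpacks the definitions in the reverse direction. The only subtle point that deserves an explicit sentence is the justification that $U$ permutes the vertices of $S$ to those of $T$ — without this, the permutation $\phi$ cannot be defined. After that, the construction of $A$ and the verification that it lies in $\operatorname{GL}_{n+1}(\mathbb{Z})$ are routine, as is the matrix computation confirming $AS^{*}=T^{*}$.
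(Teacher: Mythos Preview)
Your proposal is correct and follows essentially the same route as the paper: both start from the affine-unimodular map $U(\mathbf{v})=B\mathbf{v}+\mathbf{c}$, set $A=\begin{pmatrix}B&\mathbf{c}\\\mathbf{0}&1\end{pmatrix}$, observe $\det A=\det B=\pm1$, and verify $AS^{*}=T^{*}$ column by column after defining $\phi$ via the vertex permutation $f$. Your explicit justification that an affine bijection must permute the vertices (as extreme points) is a welcome clarification that the paper glosses over when it invokes Definition~\ref{uni eqi}.
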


\begin{proof}
Suppose that $S$ and $T$ are unimodularly equivalent. Then by Definition~\ref{uni eqi}, there exists $B\in \operatorname{GL}_n(\mathbb{Z})$, $\mathbf{c}\in\mathbb{Z}^n$, and some bijection $f:\{1,2,\hdots, n + 1\}\to \{1,2, \hdots, n + 1\}$, such that $$B\mathbf{s_i} + \mathbf{c} = \mathbf{t_{f(i)}},$$ for all $1 \leq i \leq {n + 1}$. Note that this is equivalent to having $$\begin{pmatrix}
B\mathbf{s_1} + \mathbf{c} & B\mathbf{s_2} + \mathbf{c} & \hdots & B\mathbf{s_{n + 1}} + \mathbf{c} \\
1 & 1 & \hdots & 1
\end{pmatrix}
= \begin{pmatrix}
\mathbf{t_{f(1)}} & \mathbf{t_{f(2)}} & \hdots & \mathbf{t_{f(n + 1)}} \\
1 & 1 & \hdots & 1
\end{pmatrix},
$$ which in turn is equivalent to $$\begin{pmatrix} B & \mathbf{c} \\ \mathbf{0} & 1 \end{pmatrix}  \begin{pmatrix}
\mathbf{s_1} & \mathbf{s_2} & \hdots & \mathbf{s_{n + 1}} \\
1 & 1 & \hdots & 1
\end{pmatrix} \\
= \begin{pmatrix}
\mathbf{t_{f(1)}} & \mathbf{t_{f(2)}} & \hdots & \mathbf{t_{f(n + 1)}} \\
1 & 1 & \hdots & 1
\end{pmatrix}.$$ Now let $\phi$ be a bijective function, such that $$\phi\dbinom{\mathbf{t_i}}{1}= \dbinom{\mathbf{t_{f(i)}}}{1},$$ for all $1\le i\le n + 1.$ Clearly, $\phi\in \mathrm{Sym}(\sigma_{D_T}).$ If we let $S^*$ and $T^*$ be defined as in \eqref{eq**}, then setting $$A=\begin{pmatrix} B & \mathbf{c} \\ \mathbf{0} & 1 \end{pmatrix},$$ gives us $A\in\mathrm{GL}_{n + 1}(\mathbb{Z})$ and $AS^*=T^*,$ as desired. 
\end{proof}
This, combined with Remark~\ref{sym} gives us the following proposition: 

\begin{proposition}\label{ordi} Let $S$ and $T$ be two arbitrary integral $n$-simplices in $\mathbb{R}^n$. $S$ and $T$ are unimodularly equivalent if and only if there exists some $A\in\operatorname{GL}_{n+1}(\mathbb{Z})$ and $\phi\in \mathrm{Sym}(\sigma_{D_T})$, such that $$AS^* = T^*,$$ where $S^*$ and $T^*$ are defined as in \eqref{eq**}. \end{proposition}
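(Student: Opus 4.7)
The plan is to obtain Proposition~\ref{ordi} as an immediate corollary of Propositions~\ref{fordir} and \ref{revdir} together with Remark~\ref{sym}, since the two directions of the biconditional correspond almost verbatim to the two propositions already proved. Concretely, for the forward direction I would simply invoke Proposition~\ref{revdir}: assuming $S$ and $T$ are unimodularly equivalent, that proposition already produces an $A\in\operatorname{GL}_{n+1}(\mathbb{Z})$ of the required block form, along with a $\phi\in\mathrm{Sym}(\sigma_{D_T})$, satisfying $AS^* = T^*$.

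For the reverse direction, the idea is to reduce to Proposition~\ref{fordir}. The only gap is that Proposition~\ref{fordir} assumes in addition that $S$ and $T$ have equal volume, whereas here I am only handed $A\in\operatorname{GL}_{n+1}(\mathbb{Z})$ and $\phi\in\mathrm{Sym}(\sigma_{D_T})$ with $AS^* = T^*$. I would close this gap by noting that $A\in\operatorname{GL}_{n+1}(\mathbb{Z})$ forces $\det A = \pm 1$, so
\[
|\det T^*| = |\det(AS^*)| = |\det A|\cdot|\det S^*| = |\det S^*|,
\]
and therefore
\[
\mathrm{vol}(S) = \tfrac{1}{n!}|\det S^*| = \tfrac{1}{n!}|\det T^*| = \mathrm{vol}(T).
\]
With this equal-volume condition in hand, Proposition~\ref{fordir} applies directly and yields that $S$ and $T$ are unimodularly equivalent.

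There is no real obstacle in the argument; the substantive content was already carried out in the two preceding propositions. The only point requiring slight attention is recognizing that the strengthened hypothesis $A\in\operatorname{GL}_{n+1}(\mathbb{Z})$ (as opposed to $A\in\mathbb{M}_{n+1}(\mathbb{Z})$) is precisely what supplies the missing equal-volume assumption needed to feed into Proposition~\ref{fordir}, so that Proposition~\ref{ordi} packages the two one-directional results into a clean biconditional characterization.
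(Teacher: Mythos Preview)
Your proposal is correct and follows essentially the same approach as the paper, which derives Proposition~\ref{ordi} in one line from Proposition~\ref{revdir} together with Remark~\ref{sym}. Your write-up is in fact more careful than the paper's, since you explicitly spell out how the hypothesis $A\in\operatorname{GL}_{n+1}(\mathbb{Z})$ forces $|\det S^*|=|\det T^*|$ and hence equal volume, which is precisely the extra ingredient needed to invoke Proposition~\ref{fordir} for the reverse implication.
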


Using Proposition~\ref{ordi}, we have the following SageMath code (SC) to check whether two arbitrary integral $n$-simplices $S,T\subset\mathbb{R}^n$ are unimodularly equivalent.

\textbf{SageMath code (SC):}
\begin{lstlisting}{language=Python}
sage: M_S = matrix([(s_11,s_21,...,s_n1,1),(s_12,s_22,...,s_n2,1),...,
            (s_1(n+1),s_2(n+1),...,s_n(n+1),1)])
sage: M_T = matrix([(t_11,t_21,...,t_n1,1),(t_12,t_22,...,t_n2,1),...,
            (t_1(n+1),t_2(n+1),...,t_n(n+1),1)])
sage: D_S = M_S.transpose()
sage: D_T = M_T.transpose()
sage: for s in Permutations([1,2,...,n+1]):
        P = s.to_matrix()
        N = (D_T*P)*D_S.inverse()
        b = True
        for i in range(n+1):
            for j in range(n+1):
                if N[i,j].is_integer() == False:
                    b = False
                    break
        if b:
            if (N.determinant() == 1 or N.determinant() == -1):
                print(N)
                print(" ")
                print(D_T*P)
                print("---------------------------------")
sage: print("Done")
\end{lstlisting}
\textbf{Explanation of the code:} Suppose that we are given two arbitrary integral $n$-simplices $S,T\subset\mathbb{R}^n$, and we wish to check the unimodular equivalence of $S$ and $T$ using the code above. 

We begin by entering the elements of the matrix $M_S$ and $M_T$, which are set to be the transposes of $D_S$ and $D_T$ respectively. This is done by executing the lines of code 
\begin{lstlisting}{language=Python}
sage: D_S = M_S.transpose()
sage: D_T = M_T.transpose()
\end{lstlisting}

Next, the following lines of code
\begin{lstlisting}{language=Python}
sage: for s in Permutations([1,2,...,n+1]):
        P = s.to_matrix()
\end{lstlisting}
mark the beginning of the for loop, and set $P$ to be a distinct permutation matrix of order $n$ in each of the $(n+1)!$ iterations. 

Now since $|\mathrm{Sym}(\sigma_{D_T})|=(n+1)!$, we let $$\mathrm{Sym}(\sigma_{D_T})= \{\phi_1,\phi_2,\dots,\phi_{(n+1)!}\}.$$ Also, for each $1\le i\le (n+1)!,$ let $$\Omega_i(D_T)= \begin{pmatrix}\phi_i\dbinom{\mathbf{t_1}}{1} & \phi_i\dbinom{\mathbf{t_2}}{1} & \dots & \phi_i\dbinom{\mathbf{t_{n + 1}}}{1}\end{pmatrix}.$$ Note that $\Omega_i(D_T)$ is computed for all $1 \leq i \leq (n+1)!$ using \texttt{D\_T*P}, as the loop variable \texttt{s} changes its value $(n+1)!$ times.

Then for all $1\le i\le (n+1)!$, if we set $$A_i= \Omega_i(D_T)(D_S)^{-1},$$ then we see that all the values of $A_i$ are similarly generated using \texttt{N = (D\_T*P)*D\_S.inverse()}, as the loop variable \texttt{s} changes its value $(n+1)!$ times.

Now, using Proposition~\ref{ordi}, we can conclude that $S$ and $T$ are unimodularly equivalent if and only if $A_i\in\operatorname{GL}_{n+1}(\mathbb{Z})$ for some $1\le i\le (n+1)!$. This is equivalent to having \texttt{N} to be an integer matrix with determinant $\pm 1$ for some iteration of the code. We check this at every iteration by executing the following lines of code:
\begin{lstlisting}{language=Python}
    b = True
    for i in range(n+1):
        for j in range(n+1):
            if N[i,j].is_integer() == False:
                b = False
                break
\end{lstlisting}
Here \texttt{b} stores \texttt{False} if and only if some element of \texttt{N} is not an integer, thereby discarding the data. It takes $\mathcal{O}(n^2)$ time to check whether all the elements of \texttt{N} are integers. If we get any $A_i$, or equivalently any \texttt{N}, for which all the entries are an integer, then we next check if $\det A_i=\pm 1$ by executing the line of code
\begin{lstlisting}{language=Python}
    if (N.determinant() == 1 or N.determinant() == -1):
\end{lstlisting}
If this evaluates to be true, then $A_i$ or \texttt{N} is printed along with $\Omega_i(D_T)$, by executing the following lines of code:
\begin{lstlisting}{language=Python}
    print(N)
    print(" ")
    print(D_T*P)
\end{lstlisting}
In any other case, the above-mentioned lines of code are not executed; in which case nothing is printed. The program terminates by printing \texttt{Done}. 

Combining Proposition~\ref{fordir} and Proposition~\ref{revdir} gives us the following proposition: 

\begin{proposition}\label{equivol}Let $S$ and $T$ be two arbitrary integral $n$-simplices of equal volume in $\mathbb{R}^n$. Then $S$ and $T$ are unimodularly equivalent if and only if there exists some $A\in\mathbb{M}_{n + 1}(\mathbb{Z})$ and $\phi\in \mathrm{Sym}(\sigma_{D_T})$, such that $$AS^* = T^*,$$ where $S^*$ and $T^*$ are defined as in \eqref{eq**}. \end{proposition}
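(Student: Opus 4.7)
The plan is to derive Proposition~\ref{equivol} as an immediate consequence of the two preceding propositions, without any new ideas needed. The statement of Proposition~\ref{equivol} differs from Proposition~\ref{ordi} only in that the membership condition $A\in\operatorname{GL}_{n+1}(\mathbb{Z})$ is relaxed to $A\in\mathbb{M}_{n+1}(\mathbb{Z})$, and this relaxation is paid for by the extra hypothesis that $\mathrm{vol}(S)=\mathrm{vol}(T)$. So the entire content is that this trade-off is legal, and both directions already exist in the paper.

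First I would handle the direction $(\Leftarrow)$. Assume that there exist some $A\in\mathbb{M}_{n+1}(\mathbb{Z})$ and $\phi\in \mathrm{Sym}(\sigma_{D_T})$ with $AS^* = T^*$, where $S^*$ and $T^*$ are defined as in \eqref{eq**}. Since $\mathrm{vol}(S)=\mathrm{vol}(T)$ by hypothesis, the assumptions of Proposition~\ref{fordir} are exactly satisfied, and that proposition delivers the conclusion that $S$ and $T$ are unimodularly equivalent. The key point, already established in the proof of Proposition~\ref{fordir}, is that the equality $AS^*=T^*$ forces the bottom row of $A$ to be $(\mathbf{0}\,|\,1)$, and then the equal-volume hypothesis promotes $A$ from $\mathbb{M}_{n+1}(\mathbb{Z})$ into $\operatorname{GL}_{n+1}(\mathbb{Z})$ via $|\det A|=|\det T^*|/|\det S^*|=1$.

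Next I would handle the direction $(\Rightarrow)$. Assume $S$ and $T$ are unimodularly equivalent. Applying Proposition~\ref{revdir} produces some $A\in\operatorname{GL}_{n+1}(\mathbb{Z})$ and $\phi\in\mathrm{Sym}(\sigma_{D_T})$ with $AS^*=T^*$. Since $\operatorname{GL}_{n+1}(\mathbb{Z})\subset\mathbb{M}_{n+1}(\mathbb{Z})$, this same $A$ witnesses the desired conclusion; note that the equal-volume hypothesis is not even used in this direction, since unimodular equivalence automatically implies equality of volumes. I do not anticipate any obstacle: both halves are direct quotations of the preceding results, and the only substantive check—that the equal-volume hypothesis is what upgrades an integer matrix to a unimodular one—has already been performed inside Proposition~\ref{fordir}.
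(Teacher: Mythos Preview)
Your proposal is correct and matches the paper's approach exactly: the paper simply states that Proposition~\ref{equivol} follows by combining Proposition~\ref{fordir} and Proposition~\ref{revdir}, and you have spelled out precisely how that combination works in each direction.
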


Since two Ehrhart-equivalent simplices have the same Euclidean volume, when checking the unimodular equivalence of two Ehrhart-equivalent integral $n$-simplices $S,T\subset\mathbb{R}^n$, it suffices to use (SC) with the removal of the following line of code:
\begin{lstlisting}{language=Python}
    if (N.determinant() == 1 or N.determinant() == -1):
\end{lstlisting} 
We will refer to this version as the ``modified SC.''

Using modified SC, we find that all Ehrhart-equivalent integral $4$-simplices in Examples $1-5$ are mutually unimodularly equivalent, and therefore also mutually $\operatorname{GL}_4(\mathbb{Z})$-equidecomposable by Remark~\ref{uni}. Again, using (SC), we show that the integral $4$-polytopes $P_1$ and $P_2$ in Example 6 (which are Ehrhart-equivalent) are $\operatorname{GL}_4(\mathbb{Z})$-equidecomposable. The proof is as follows:

\begin{proof} The integral $4$-polytopes $P_1,P_2\subset\mathbb{R}^4$ have been defined as $$P_1= \mathrm{conv}\{(0,0,0,0),(0,0,0,1),(0,0,1,0),(0,1,0,0),(1,0,0,0),(1,1,1,1)\},$$ and $$P_2=\mathrm{conv}\{(0,0,0,0),(1,2,2,4),(3,2,2,3),(1,1,1,2),(0,3,2,3),(5,8,7,12)\}.$$ Using the \texttt{triangulate()} function of SageMath we find that $P_1$ can be decomposed into the following four relatively open $4$-simplices $$S_1= \mathrm{conv}\{(0, 0, 0, 0), (0, 0, 0, 1), (0, 0, 1, 0), (0, 1, 0, 0), (1, 1, 1, 1)\},$$ $$S_2= \mathrm{conv}\{(0, 0, 0, 0), (0, 0, 0, 1), (0, 0, 1, 0), (1, 0, 0, 0),(1, 1, 1, 1)\},$$ $$S_3= \mathrm{conv}\{(0, 0, 0, 0), (0, 0, 0, 1), (0, 1, 0, 0), (1, 0, 0, 0), (1, 1, 1, 1)\},$$ $$S_4= \mathrm{conv}\{(0, 0, 0, 0), (0, 0, 1, 0), (0, 1, 0, 0), (1, 0, 0, 0), (1, 1, 1, 1)\},$$ and $P_2$ can be decomposed into the following four relatively open $4$-simplices $$T_1= \mathrm{conv}\{(0, 0, 0, 0), (0, 3, 2, 3), (1, 1, 1, 2), (1, 2, 2, 4), (5, 8, 7, 12)]\},$$ $$T_2= \mathrm{conv}\{(0, 0, 0, 0), (0, 3, 2, 3), (1, 1, 1, 2), (3, 2, 2, 3), (5, 8, 7, 12)\},$$ $$T_3= \mathrm{conv}\{(0, 0, 0, 0), (0, 3, 2, 3), (1, 2, 2, 4), (3, 2, 2, 3),(5, 8, 7, 12)\},$$ $$T_4= \mathrm{conv}\{(0, 0, 0, 0),(1, 1, 1, 2), (1, 2, 2, 4), (3, 2, 2, 3), (5, 8, 7, 12)\}.$$ 

Using (SC), we show that $S_i$ is unimodularly equivalent to $T_i$, for all $1\le i\le 4,$ and therefore conclude that $P_1$ is $\operatorname{GL}_4(\mathbb{Z})$-equidecomposable to $P_2.$\end{proof} 

\begin{rem}\label{true} Since all Ehrhart-equivalent integral $4$-simplices in Examples 1-5 are mutually $\operatorname{GL}_4(\mathbb{Z})$-equidecomposable, by Proposition~\ref{dilations}, we can conclude that their sixth dilations are mutually $\operatorname{GL}_4(\mathbb{Z})$-equidecomposable as well. Similarly, since we proved that the Ehrhart-equivalent integral $4$-polytopes $P_1$ and $P_2$ in Example 6 are $\operatorname{GL}_4(\mathbb{Z})$-equidecomposable, $6P_1$ and $6P_2$ are $\operatorname{GL}_4(\mathbb{Z})$-equidecomposable as well. Thus Conjecture~\ref{ked} for the specific case of $n=4$ holds true in each of the above-mentioned cases.\end{rem}

Though the examples above may suggest that any two integral $4$-simplices $S,T \subset \mathbb{R}^4$ that are Ehrhart-equivalent are also unimodularly equivalent, this turns out to be false.
A counterexample is motivated from a $2$-dimensional example described in \cite{3D}, where we have the following rational triangles in $\mathbb{R}^2$:
$$P_1 =  \mathrm{conv}\left \{(-4,0),(-1,0),\left(-3,\tfrac{2}{3}\right)\right \} \quad \text{and} \quad P_2 =  \mathrm{conv}\{(1,0),(3,0),(1,1)\},$$
as shown below in Figure~\ref{fig: image1}. 

\begin{figure}[h]
    \centering
    \includegraphics[width=0.6\textwidth]{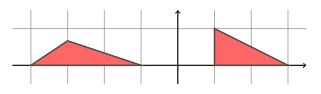}
    \caption{The above figure shows $P_1$ (left) and $P_2$ (right).}
    \label{fig: image1}
\end{figure}

We can decompose $P_1$ into two relatively open $2$-simplices, $$Q_1 =  \mathrm{conv}\left\{(-3,0),(-4,0),\left (-3,\tfrac{2}{3}\right )\right \} \quad \text{and}\quad Q_2 =  \mathrm{conv}\left\{(-3,0),(-1,0),\left(-3,\tfrac{2}{3} \right )\right\},$$ and $P_2$ into two relatively open triangles ($2$-simplices), $$Q_1' =  \mathrm{conv}\left\{(3,0),\left(1,\tfrac{2}{3}\right),(1,1)\right\} \quad\text{and}\quad Q_2' =  \mathrm{conv}\left \{(1,0),(3,0),(1,\tfrac{2}{3})\right\},$$ as shown in Figure~\ref{fig: image2}.
\begin{figure}[h]
    \centering
    \includegraphics[width=0.6\textwidth]{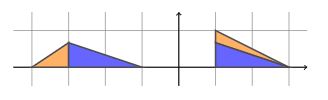}
    \caption{The above figure shows the transformation of $Q_1$ and $Q_2$ (left) to $Q_1'$ and $Q_2'$ (right) respectively. Here, $Q_1$ and $Q_1'$ are filled in orange, and $Q_2$ and $Q_2'$ are filled in blue.} 
    \label{fig: image2}
\end{figure}

Now, note that $Q_2'$ is just $Q_2$ shifted, and $Q_1'$ can be obtained from $Q_1$ via the affine-unimodular transformation
\begin{align*}
    \begin{pmatrix}x \\ y \end{pmatrix} \mapsto \begin{pmatrix} 2 & -3 \\ -1 & 1\end{pmatrix}\begin{pmatrix}x \\ y \end{pmatrix} + \begin{pmatrix} 9 \\ -3\end{pmatrix},
\end{align*}
which implies that $Q_1$ is unimodularly equivalent to $Q_1'$ and $Q_2$ is unimodularly equivalent to $Q_2'$. This, by definition implies that $P_1$ and $P_2$ are $\operatorname{GL}_2(\mathbb{Z})$-equidecomposable, and hence Ehrhart-equivalent.  

To turn this into a working counterexample, we take $P_1$ and $P_2$ and perform the following transformations:
\begin{align*}
    &\begin{pmatrix}x \\ y \end{pmatrix} \mapsto 3\begin{pmatrix}x \\ y \end{pmatrix} + \begin{pmatrix} 12 \\ 0\end{pmatrix} \quad\text{and}\quad \begin{pmatrix}x \\ y \end{pmatrix} \mapsto 3\begin{pmatrix}x \\ y \end{pmatrix} + \begin{pmatrix} -3 \\ 0\end{pmatrix},
\end{align*}
respectively, so that $P_1$ and $P_2$ are first dilated to become integral simplices, and then translated by a lattice vector, so that one of its vertices is shifted to the origin, for convenience. $P_1$ and $P_2$ are then transformed into $P'_1$ and $P'_2$, respectively, where $$P'_1= \mathrm{conv}\{(0,0),(9,0),(3,2)\}$$
and $$P'_2= \mathrm{conv}\{(0,0),(6,0),(0,3)\}.$$

Following these transformations, we construct the $4$-dimensional pyramids of $P_1$ and $P_2$, given by
$$R_1= \mathrm{conv}\{(0,0,0,0),(9,0,0,0),(3,2,0,0),(0,0,1,0),(0,0,0,1)\}$$
and
$$R_2= \mathrm{conv}\{(0,0,0,0),(6,0,0,0),(0,3,0,0),(0,0,1,0),(0,0,0,1)\},$$
respectively. We find that
$$L_{R_1}(t) = \frac{3}{4} t^4 + 4t^3 + \frac{29}{4} t^2 + 5t + 1 = L_{R_2}(t),$$ which implies that $R_1$ and $R_2$ are Ehrhart-equivalent. However, using our algorithm, it turns out that $R_1$ and $R_2$ are not unimodularly equivalent. This therefore serves as a valid counterexample; $R_1$ and $R_2$ are Ehrhart-equivalent integral $4$-polytopes, but they are not unimodularly equivalent. Note, however, that this counterexample does not disprove Conjecture~\ref{ked} for the specific case of $n=4$.

\section{Unimodular equivalence of pyramids of simplices} \label{pyramids} We now consider whether, given two arbitrary unimodularly equivalent $2$-simplices in $\mathbb{R}^2$, their $n$-dimensional pyramids in $\mathbb{R}^n$ are unimodularly equivalent.

\begin{theorem} \label{thm: main}
Let $Q_{2,1}, Q_{2,2}\subset\mathbb{R}^2$ be two arbitrary integral $2$-simplices, such that 
$$Q_{2,1}= \mathrm{conv}\{(0,0),(a,b),(c,d) \},$$ $$Q_{2,2}= \mathrm{conv}\{(0,0),(a',b'),(c',d')\}.$$
Now for any $n>2$, let $Q_{n,1}, Q_{n,2}\subset\mathbb{R}^n$ be the $n$-dimensional pyramids generated by $Q_{2,1}$ and $Q_{2,2}$ respectively by adding the elementary basis vectors $\mathbf{e_3}, \dots, \mathbf{e_n}$; in particular,
\begin{align*}
    &Q_{n,1} =  \mathrm{conv}\{\mathbf{0},(a,b,\dots,0),(c,d,\dots,0),\mathbf{e_3},\dots,\mathbf{e_n}\}, \\
    &Q_{n,2} =  \mathrm{conv}\{\mathbf{0},(a',b',\dots,0),(c',d',\dots,0),\mathbf{e_3},\dots,\mathbf{e_n}\}.
\end{align*}
Then $Q_{2,1}$ and $Q_{2,2}$ are unimodularly equivalent if and only if $Q_{n,1}$ and $Q_{n,2}$ are unimodularly equivalent. 
\end{theorem}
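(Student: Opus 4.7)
The plan is to prove both directions separately: the forward implication follows from an explicit lifting construction, while the reverse requires analyzing how the vertex bijection induced by the $n$-dimensional equivalence interacts with the base-apex decomposition of the pyramids. For the forward direction, suppose $Q_{2,1}\cong Q_{2,2}$ via $B\in\operatorname{GL}_2(\mathbb{Z})$ and $\mathbf{c}\in\mathbb{Z}^2$ satisfying $BQ_{2,1}+\mathbf{c}=Q_{2,2}$. I would lift this to $\mathbb{R}^n$ by setting
$$\tilde{A} = \begin{pmatrix} B & -\mathbf{c}\,\mathbf{1}^T \\ \mathbf{0} & I_{n-2} \end{pmatrix}, \qquad \tilde{\mathbf{c}} = \begin{pmatrix} \mathbf{c} \\ \mathbf{0} \end{pmatrix},$$
where $\mathbf{1}^T$ denotes the $1\times(n-2)$ row of ones. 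The shearing block $-\mathbf{c}\,\mathbf{1}^T$ is designed precisely so that $\tilde{A}\mathbf{e_j}+\tilde{\mathbf{c}}=\mathbf{e_j}$ for every $j\ge 3$, while $\tilde{A}$ carries the planar base vertices to $BQ_{2,1}+\mathbf{c}=Q_{2,2}$ inside the hyperplane $x_3=\cdots=x_n=0$. Since $\det\tilde{A}=\det B=\pm 1$, this yields $Q_{n,1}\cong Q_{n,2}$.

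For the reverse direction, assume $Q_{n,1}\cong Q_{n,2}$ via some $\tilde{A},\tilde{\mathbf{c}}$ and let $\pi$ denote the induced vertex bijection. I would split into two cases depending on whether $\pi$ sends the base subset $\{\mathbf{0},(a,b,\dots,0),(c,d,\dots,0)\}$ of $Q_{n,1}$ to the analogous base subset of $Q_{n,2}$. In Case A, where $\pi$ preserves the base-apex decomposition, it must permute $\{\mathbf{e_3},\dots,\mathbf{e_n}\}$; writing $\tilde{A}=\bigl(\begin{smallmatrix} B & E \\ F & G \end{smallmatrix}\bigr)$ in $2\times(n-2)$ block form, the images of the two nonzero base vertices must lie in the base hyperplane $\{x_3=\cdots=x_n=0\}$ of $Q_{n,2}$. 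Combined with the linear independence of $(a,b)$ and $(c,d)$, this forces $F=\mathbf{0}$, so $\det B=\pm 1$ and the affine map $\mathbf{x}\mapsto B\mathbf{x}+\tilde{\mathbf{c}}_{1:2}$ gives the desired equivalence $Q_{2,1}\cong Q_{2,2}$.

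Case B, where $\pi$ sends the base subset to a non-base $3$-vertex subset of $Q_{n,2}$, is the main obstacle. Here the convex hull of the image vertices is a $2$-face $F$ of $Q_{n,2}$, and because $\tilde{A}$ maps $\mathbb{Z}^n$ to itself it induces an abstract unimodular equivalence between $Q_{2,1}$ and $F$, each equipped with the induced lattice from $\mathbb{Z}^n$ in its affine span. I would classify the induced-lattice shape of every non-base $2$-face by direct computation: a ``$2$-base-$1$-apex'' face such as $\{\mathbf{0},(a',b',\dots,0),\mathbf{e_j}\}$ or $\{(a',b',\dots,0),(c',d',\dots,0),\mathbf{e_j}\}$ is unimodularly equivalent to the right triangle $\mathrm{conv}\{(0,0),(g,0),(0,1)\}$ where $g$ is the $\gcd$ of the corresponding base edge vector of $Q_{2,2}$, while any $2$-face containing at most one base vertex is unimodular. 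Matching normalized areas forces $v:=|ad-bc|$ to equal either $1$ or one of the gcds $g_i'$ attached to the edges of $Q_{2,2}$, so $Q_{2,1}$ is unimodularly equivalent to $\mathrm{conv}\{(0,0),(v,0),(0,1)\}$. Applying the identical argument to $\pi^{-1}$ yields the analogous conclusion for $Q_{2,2}$, and since any lattice triangle with edge lattice lengths $v,1,1$ and normalized area $v$ reduces to $\mathrm{conv}\{(0,0),(v,0),(0,1)\}$ via a horizontal shear $(x,y)\mapsto(x-ky,y)$ (possibly composed with a reflection), we conclude $Q_{2,1}\cong Q_{2,2}$.
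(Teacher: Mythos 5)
Your proof is correct, and while the forward direction is essentially identical to the paper's (your shearing block $-\mathbf{c}\,\mathbf{1}^T$ is exactly the matrix $A_{n-2}$ in Proposition~\ref{higher n}), your backward direction takes a genuinely different route. The paper splits on which vertex of the base of $Q_{n,1}$ gets sent to an apex $\mathbf{e_i}$, and in Lemmas~\ref{case 2} and~\ref{case 3} extracts, by examining the $i$-th row of the matrix entrywise, that $\bigl(\begin{smallmatrix} a & c/k \\ b & d/k\end{smallmatrix}\bigr)\in\operatorname{GL}_2(\mathbb{Z})$ with $k=\gcd(c,d)$ (and the analogous statement after translating); it then runs the same analysis on $U^{-1}$ and composes the resulting unimodular matrices. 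You instead observe that the base of $Q_{n,1}$ is a $2$-face, so its image is a $2$-face of $Q_{n,2}$, and you classify the induced-lattice type of every non-base $2$-face of the pyramid: the ``two base vertices plus one apex'' faces are $\mathrm{conv}\{(0,0),(g,0),(0,1)\}$ for $g$ the $\gcd$ of the corresponding base edge, and all others are unimodular. The two arguments land in the same place --- in the mixed case each of $Q_{2,1}$, $Q_{2,2}$ is shown to be equivalent to the standard triangle $T_v=\mathrm{conv}\{(0,0),(v,0),(0,1)\}$ --- but yours replaces the coordinate computations with a cleaner lattice-geometric invariant (the induced lattice on a face), which also makes the case analysis shorter and more symmetric between $U$ and $U^{-1}$, and is better positioned to generalize to pyramids over higher-dimensional bases (one of the paper's stated open problems). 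Two small points you should make explicit: first, that $|ad-bc|=|a'd'-b'c'|$ (needed to identify $T_v$ with $T_{v'}$ at the end), which follows since the unimodularly equivalent pyramids have equal normalized volume and that volume equals the normalized area of the base; second, in Case A, that $\tilde{\mathbf{c}}$ has vanishing last $n-2$ coordinates (because $\mathbf{0}$ maps to a base vertex), which is what lets you conclude $F(a,b)^T=F(c,d)^T=\mathbf{0}$ and hence $F=O$.
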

This theorem establishes a direct bijective relationship between the unimodular equivalence of simplices and their pyramids. The proof of Theorem~\ref{thm: main} is long and technical, and so we split up the forward and backward directions into two propositions and prove them separately.

\begin{proposition}\label{higher n}
For all $n \ge 2$, let $Q_{n,1}$, and $Q_{n,2}$ be as defined in Theorem~\ref{thm: main}. If $Q_{2,1}$ and $Q_{2,2}$ are unimodularly equivalent, then for all $n\ge 2$, $Q_{n,1}$ and $Q_{n,2}$ are also unimodularly equivalent, $\mathrm{GL}_n(\mathbb{Z})$-equidecomposable and Ehrhart-equivalent.
\end{proposition}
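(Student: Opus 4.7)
The case $n=2$ is the hypothesis itself, so fix $n>2$ and let $U:\mathbb{R}^2\to\mathbb{R}^2$, $U(\mathbf{v})=A\mathbf{v}+\mathbf{b}$ with $A\in\operatorname{GL}_2(\mathbb{Z})$ and $\mathbf{b}\in\mathbb{Z}^2$, be an affine-unimodular map witnessing $U(Q_{2,1})=Q_{2,2}$. My strategy is to lift $U$ to an affine-unimodular $\tilde U:\mathbb{R}^n\to\mathbb{R}^n$ with $\tilde U(Q_{n,1})=Q_{n,2}$; the remaining conclusions ($\operatorname{GL}_n(\mathbb{Z})$-equidecomposability and Ehrhart-equivalence) then follow immediately from Remark~\ref{uni} and Corollary~\ref{uni2}.

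The subtlety is that $\mathbf{b}$ need not vanish, so the naive block-diagonal lift using $\operatorname{diag}(A,I_{n-2})$ with translation $(\mathbf{b},\mathbf{0})^T$ fails: it shifts every apex vertex $\mathbf{e}_i$ by $(\mathbf{b},\mathbf{0})^T$, which is off the coordinate axes unless $\mathbf{b}=\mathbf{0}$. My fix is to absorb this unwanted translation back into the linear part on precisely the apex coordinates. Concretely I would take
\[
M \;=\; \begin{pmatrix} A & -\mathbf{b}\,\mathbf{1}^{T} \\ \mathbf{0} & I_{n-2} \end{pmatrix}, \qquad \mathbf{t}\;=\;\begin{pmatrix}\mathbf{b}\\ \mathbf{0}\end{pmatrix},
\]
where $\mathbf{1}\in\mathbb{Z}^{n-2}$ is the all-ones column, so that $-\mathbf{b}\,\mathbf{1}^T$ is the $2\times(n-2)$ matrix whose every column equals $-\mathbf{b}$. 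Then $M$ has integer entries, block-triangularity gives $\det M=\det A=\pm1$, and $\mathbf{t}\in\mathbb{Z}^n$, so $\tilde U(\mathbf{v})=M\mathbf{v}+\mathbf{t}$ is a genuine affine-unimodular transformation of $\mathbb{R}^n$.

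Next I would check the induced bijection of vertex sets. For each apex vertex $\mathbf{e}_i$ with $i\geq 3$, the $i$-th column of $M$ is $(-\mathbf{b},\,\text{$(i-2)$-th standard basis vector of }\mathbb{R}^{n-2})^T$, so $\tilde U(\mathbf{e}_i)=(-\mathbf{b}+\mathbf{b},\,\mathbf{e}_{i-2})^T=\mathbf{e}_i$. For each base vertex $(v,\mathbf{0})$ with $v\in\{\mathbf{0},(a,b),(c,d)\}$, $\tilde U$ acts as $U$ on the first two coordinates and keeps the remaining $n-2$ coordinates zero, so $\tilde U((v,\mathbf{0}))=(U(v),\mathbf{0})$, which is one of the base vertices of $Q_{n,2}$. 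Since $U$ permutes $\{\mathbf{0},(a,b),(c,d)\}$ onto $\{\mathbf{0},(a',b'),(c',d')\}$, the affine map $\tilde U$ sends the vertex set of $Q_{n,1}$ bijectively onto that of $Q_{n,2}$, and therefore $\tilde U(Q_{n,1})=Q_{n,2}$.

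The one nontrivial idea is spotting this lift: the obvious block-diagonal construction is already wrong whenever $\mathbf{b}\neq\mathbf{0}$, and the trick is to fill the upper-right block with copies of $-\mathbf{b}$ so that the translation is cancelled exactly on the apex vertices while leaving the base vertices unaffected. Once this correct $M$ and $\mathbf{t}$ are written down, the determinant calculation and vertex check are routine, and the two remaining conclusions are direct appeals to Remark~\ref{uni} and Corollary~\ref{uni2}.
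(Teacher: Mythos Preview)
Your proof is correct and is essentially identical to the paper's: both lift $U$ via the block-upper-triangular matrix whose upper-right $2\times(n-2)$ block has every column equal to $-\mathbf{b}$, together with the translation $(\mathbf{b},\mathbf{0})^T$. If anything, you are more explicit than the paper in verifying the vertex bijection (the paper simply asserts $V_n(Q_{n,1})=Q_{n,2}$ without checking the apex vertices), and your explanation of why the naive block-diagonal lift fails is a helpful addition.
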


\begin{proof} Suppose that $Q_{2,1}$ and $Q_{2,2}$ are unimodularly equivalent. 
Then there exists some affine-unimodular transformation
\begin{align*}
    U(\mathbf{v})= M \mathbf{v} + \binom{x}{y}
\end{align*}
defined on $\mathbf{v}\in \mathbb{R}^2$, where $M\in \operatorname{GL}_2(\mathbb{Z})$ and $\dbinom{x}{y}\in\mathbb{Z}^2.$

For any $n > 2$, let
\begin{align*}
    B_n =  \begin{pmatrix} M & A_{n-2} \\ O & I_{n-2}\end{pmatrix},
\end{align*}
where $I_{n-2}$ is the $(n-2)$-dimensional identity matrix, and $A_{n-2}\in\mathbb{M}_{2,n-2}(\mathbb{Z})$, is the matrix given by
\begin{align*}
    A_{n-2} = \begin{pmatrix} -x & \dots &-x \\ -y & \dots & -y \end{pmatrix}.
\end{align*}
We see that $\det(B_n)=\pm \det (M)=\pm 1,$ so $B_n\in\operatorname{GL}_n(\mathbb{Z})$. Thus $Q_{n,1}$ and $Q_{n,2}$ are unimodularly equivalent via the affine-unimodular transformation $V_n\in\mathrm{GL}_n(\mathbb{Z})\ltimes \mathbb{Z}^n$. In particular, $V_n(Q_{n,1})=Q_{n,2}$, where
\begin{align*}
    V_n(\mathbf{v}) = B_n\mathbf{v} + \begin{psmallmatrix}x \\ y \\ 0 \\ \svdots \\ 0\end{psmallmatrix} 
\end{align*}
for all $\mathbf{v}\in \mathbb{R}^n$. Since, $Q_{n,1}$ and $Q_{n,2}$ are integral simplices in $\mathbb{R}^n$, they must also be $\mathrm{GL}_n(\mathbb{Z})$-equidecomposable and Ehrhart-equivalent by Remark~\ref{uni}.
\end{proof}

We now prove Proposition~\ref{backwards dir}, the backwards direction of Theorem~\ref{thm: main}.

\begin{proposition} \label{backwards dir}
For all $n \ge 2$, let $Q_{n,1}$, and $Q_{n,2}$ be as defined in Theorem~\ref{thm: main}. If for some $n>2$, $Q_{n,1}$ and $Q_{n,2}$ are unimodularly equivalent, then $Q_{2,1}$ and $Q_{2,2}$ are also unimodularly equivalent.
\end{proposition}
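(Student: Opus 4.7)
The plan is to analyze the combinatorial permutation of vertices induced by the affine-unimodular transformation $U(\mathbf{v}) = A\mathbf{v} + \mathbf{t}$, where $A \in \operatorname{GL}_n(\mathbb{Z})$ and $\mathbf{t} \in \mathbb{Z}^n$, satisfying $U(Q_{n,1}) = Q_{n,2}$. Label the vertices $w_0 = \mathbf{0}$, $w_1 = (a,b,0,\dots,0)$, $w_2 = (c,d,0,\dots,0)$, $w_3 = \mathbf{e_3}, \ldots, w_n = \mathbf{e_n}$ of $Q_{n,1}$ and analogously $w_0', \ldots, w_n'$ for $Q_{n,2}$, so that $U(w_i) = w'_{\sigma(i)}$ for some permutation $\sigma \in S_{n+1}$. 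The plan splits into two cases based on whether $\sigma$ preserves the partition of indices into the \emph{base} $\{0, 1, 2\}$ and the \emph{apex} $\{3, \ldots, n\}$.

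In the partition-preserving case, the translation $\mathbf{t} = w'_{\sigma(0)}$ is a base vertex of $Q_{n,2}$ and hence vanishes in coordinates $3, \ldots, n$. For $j \in \{1, 2\}$ the equation $A w_j = w'_{\sigma(j)} - \mathbf{t}$ has right-hand side vanishing in rows $3, \ldots, n$; using $ad - bc \neq 0$ to solve componentwise forces the first two columns of $A$ to vanish in those rows. Combined with the observation that for $i \geq 3$ the $i$th column $A \mathbf{e_i} = \mathbf{e_{\sigma(i)}} - \mathbf{t}$ has its lower $(n-2)$-coordinate block equal to a standard basis vector in $\mathbb{R}^{n-2}$, the matrix $A$ must take the block form $\begin{pmatrix} B & C \\ \mathbf{0} & D \end{pmatrix}$ with $D$ an $(n-2) \times (n-2)$ permutation matrix. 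Since $\det D = \pm 1$ and $\det A = \pm 1$, we obtain $B \in \operatorname{GL}_2(\mathbb{Z})$; letting $\pi \colon \mathbb{R}^n \to \mathbb{R}^2$ denote projection onto the first two coordinates, the affine-unimodular map $V(\mathbf{v}) = B\mathbf{v} + \pi(\mathbf{t})$ then sends vertices of $Q_{2,1}$ bijectively to those of $Q_{2,2}$, giving the desired equivalence.

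In the partition-mixing case, some $\sigma(i)$ with $i \in \{0, 1, 2\}$ equals an apex index $m \geq 3$, and the plan is to run a subcase analysis on $\sigma$. Applying Cramer's rule row by row to the system $a A_{*,1} + b A_{*,2} = w'_{\sigma(1)} - \mathbf{t}$ and $c A_{*,1} + d A_{*,2} = w'_{\sigma(2)} - \mathbf{t}$, the right-hand side entries in row $m$ lie in $\{-1, 0, 1\}$ and attain $\pm 1$ precisely when $m$ equals one of $\sigma(0), \sigma(1), \sigma(2)$. Integrality of the solution in such a row forces $|ad - bc|$ to divide $\gcd(a - c, b - d)$ if $\sigma(0) = m$, $\gcd(c, d)$ if $\sigma(1) = m$, or $\gcd(a, b)$ if $\sigma(2) = m$. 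Since each of these gcds already divides $|ad - bc|$, equality must hold, so $|ad - bc|$ equals one of $\gcd(a, b)$, $\gcd(c, d)$, or $\gcd(a - c, b - d)$. A routine computation in $\mathbb{R}^2$ then shows that each such equality reduces $Q_{2,1}$, via a $\operatorname{GL}_2(\mathbb{Z}) \ltimes \mathbb{Z}^2$ transformation, to the canonical form $\mathrm{conv}\{(0, 0), (g, 0), (0, 1)\}$ with $g = |ad - bc|$. Running the same analysis on $U^{-1}$ (which is also partition-mixing) brings $Q_{2,2}$ to the same canonical form (with the same $g$, since $|a'd' - b'c'| = |ad - bc|$ from $|\det A| = 1$), and therefore $Q_{2,1}$ and $Q_{2,2}$ are unimodularly equivalent.

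The main obstacle is the volume of case-by-case verification in the partition-mixing case: in each combinatorial subcase of $\sigma$ one must identify the specific row $m \geq 3$ producing one of the three divisibility constraints and check that the resulting reduction of $Q_{2,1}$ to the canonical form is genuine. The unifying observation that keeps the proof tractable is that only three distinct arithmetic conditions ever arise, and all three lead to the same canonical form for the base triangle, so the many subcase derivations collapse into a single conclusion.
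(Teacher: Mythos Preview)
Your proposal is correct and follows essentially the same strategy as the paper: both split into the partition-preserving case (handled identically via the block structure of $A$) and the partition-mixing case, and in the latter both derive from an appropriate row $m\ge 3$ the arithmetic constraint that $|ad-bc|$ equals one of $\gcd(a,b)$, $\gcd(c,d)$, or $\gcd(a-c,b-d)$, and then apply the same analysis to $U^{-1}$.

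The one organisational difference is in how the final conclusion is drawn. The paper packages the arithmetic constraint as Lemmas~\ref{case 2} and~\ref{case 3}, obtaining explicit matrices $A_1,A_2,A'\in\operatorname{GL}_2(\mathbb{Z})$, and then in each of the $2\times 2$ subcases constructs $T=A'(A_i)^{-1}$ and right-multiplies by $\operatorname{diag}(1,k)$ to exhibit the map from $Q_{2,1}$ to $Q_{2,2}$ directly. You instead observe that each arithmetic equality forces the base triangle to be $\operatorname{GL}_2(\mathbb{Z})\ltimes\mathbb{Z}^2$-equivalent to the fixed canonical simplex $\mathrm{conv}\{(0,0),(g,0),(0,1)\}$ with $g=|ad-bc|$, and conclude by transitivity. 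Your packaging is slightly more uniform (the three subcases collapse to one canonical form rather than several pairwise matchings), while the paper's version is more explicit about the transformation; the underlying content is the same.
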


For Proposition~\ref{backwards dir}, let $U\in \operatorname{GL}_n(\mathbb{Z})\ltimes \mathbb{Z}^n$ be the affine-unimodular transformation such that $U(Q_{n,1}) = Q_{n,2}$. Then there exist some $M = (m_{ij}) \in \operatorname{GL}_n(\mathbb{Z})$ and $\mathbf{u}\in\mathbb{Z}^n$ be such that
\begin{align*}
    U(\mathbf{v}) = M\mathbf{v}+\mathbf{u}, 
\end{align*}
for all $\mathbf{v}\in Q_{n,1}.$

We first require some preparatory lemmas.

\begin{lemma}\label{case 2}
Let $Q_{n,1}$, $Q_{n,2}$ be $n$-dimensional pyramids as defined in Theorem~\ref{thm: main}, and let $U\in \operatorname{GL}_n(\mathbb{Z})\ltimes \mathbb{Z}^n$ be an affine-unimodular transformation such that $U(Q_{n,1}) = Q_{n,2}$. If $$U(a,b,0,\dots,0) = \mathbf{e_i}$$ for some $3\le i\le n$, then
$$
\begin{pmatrix}
    a & \frac{c}{k} \\
    b & \frac{d}{k}
\end{pmatrix}
\in \operatorname{GL}_n(\mathbb{Z}),
$$
where $k=\gcd(c,d)$.
\end{lemma}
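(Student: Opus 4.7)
The plan is to show $|ad - bc| = \gcd(c,d) =: k$, from which it follows immediately that the integer matrix $\begin{pmatrix} a & c/k \\ b & d/k \end{pmatrix}$ has determinant $\pm 1$ and is therefore unimodular. The two main ingredients will be (i) lattice-point bookkeeping along the edges of $Q_{n,1}$ through $v_1$, and (ii) Pick's theorem applied to the $2$-face $Q_{2,1}\subset Q_{n,1}$.

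First I would verify the side conditions $\gcd(a,b)=1$ and $\gcd(a-c,b-d)=1$. Since $U$ is an affine lattice isomorphism, it sends each edge of $Q_{n,1}$ through $v_1$ bijectively onto an edge of $Q_{n,2}$ through $\mathbf{e}_i$, preserving the number of lattice points. Every vertex of $Q_{n,2}$ other than $\mathbf{e}_i$ has $i$-th coordinate $0$ (since $i\ge 3$), so every edge of $Q_{n,2}$ through $\mathbf{e}_i$ has an edge vector whose $i$-th entry is $\pm 1$ and is therefore primitive. Hence each edge of $Q_{n,1}$ through $v_1$ is primitive too, and reading this off the edges $[\mathbf{0},v_1]$ and $[v_1,v_2]$ gives exactly the two gcd identities above.

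Next I would show that the planar triangle $Q_{2,1}$ contains no interior lattice points. As the convex hull of three vertices of $Q_{n,1}$, $Q_{2,1}$ is a $2$-face of $Q_{n,1}$, and $U$ maps it to the $2$-face $\operatorname{conv}\{U(\mathbf{0}),\mathbf{e}_i,U(v_2)\}$ of $Q_{n,2}$. The two vertices of this image face other than $\mathbf{e}_i$ both have $i$-th coordinate $0$, so the $i$-th coordinate of any point in the face equals its barycentric weight $\alpha$ at $\mathbf{e}_i$. In the relative interior, $\alpha\in(0,1)$, which is never an integer, so the image face has no relative-interior lattice points. Since $U$ restricts to a bijection on $\mathbb{Z}^n$, $Q_{2,1}$ has no interior lattice points either.

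Finally I would invoke Pick's theorem on the integral triangle $Q_{2,1}\subset\mathbb{R}^2$. With $I=0$ interior lattice points and boundary count $B=\gcd(a,b)+\gcd(c,d)+\gcd(a-c,b-d)=k+2$, the area $\tfrac{1}{2}|ad-bc|$ of $Q_{2,1}$ satisfies $\tfrac{1}{2}|ad-bc|=I+B/2-1=k/2$, so $|ad-bc|=k$, as needed. The main obstacle is the second step: a naive case analysis on the images of $\mathbf{0}$ and $v_2$ under $U$ splits into many sub-cases (depending on whether they land on $\mathbf{0}$, $v_1'$, $v_2'$, or some $\mathbf{e}_j$), whereas the observation that every $2$-face of $Q_{n,2}$ containing $\mathbf{e}_i$ is uniformly "barycentrically forbidden" from having relative-interior lattice points dispatches all cases in one stroke.
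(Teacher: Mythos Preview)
Your argument is correct and takes a genuinely different route from the paper's. The paper works purely algebraically with the $i$-th row of the matrix $M$: from $U(a,b,0,\dots,0)=\mathbf{e}_i$, $U(c,d,0,\dots,0)\ne\mathbf{e}_i$, and $U(\mathbf{0})\ne\mathbf{e}_i$ it reads off the two linear relations $am_{i1}+bm_{i2}=1$ and $cm_{i1}+dm_{i2}=0$, and then solves for $m_{i1},m_{i2}$ to obtain $ad-bc=\pm\gcd(c,d)$ directly. Your approach instead translates the hypothesis into geometric constraints on the $2$-face through $v_1$ (primitive edges at $v_1$ and no relative-interior lattice points), and then lets Pick's theorem do the bookkeeping. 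The paper's computation is shorter and self-contained, but your barycentric observation---that every face of $Q_{n,2}$ containing $\mathbf{e}_i$ has $i$-th coordinate equal to the barycentric weight at $\mathbf{e}_i$, hence no interior lattice points---is a clean uniform statement that avoids any case split on where $\mathbf{0}$ and $v_2$ land, and it points naturally toward the higher-dimensional generalizations the paper raises in its open problems.
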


\begin{proof}
Since $U(a,b,0,\dots,0) = \mathbf{e_i}$, we have a unimodular matrix $M$ and translation vector $\mathbf{u}$ such that
\begin{align*}
    M\begin{psmallmatrix}a\\b\\ 0\\ \svdots\\ 0\end{psmallmatrix}+\mathbf{u}=\mathbf{e_i}.
\end{align*}
Then, looking at the $i$-th component of both sides of the matrix equation gives us
\begin{align}\label{fora}
    am_{i1}+ bm_{i2}+u_i = 1.
\end{align}

We know that $U(c,d,\dots,0)$ is one of the vertices of $Q_{n,2}$, so it is either $(0,0,0,\dots, 0)$, $(a',b',0,\dots,0)$, $(c',d',0,\dots,0)$, or $\mathbf{e_j}$ for some $j \neq i$. Note that in any of these cases, its $i$-th component is $0$. Thus, we have our second equation,
\begin{align}\label{forb}
    cm_{i1}+dm_{i2}+u_i=0.
\end{align}

Again, since the origin is a vertex of $Q_{n,1}$, it must be mapped to one of the vertices of $Q_{n,2}$, so $\mathbf{u}$ must be a vertex of $Q_{n,2}$. But, since the vertex $(a,c,\dots,0)$ is already mapped to $\mathbf{e_i}$, this means that the origin is not also mapped to $\mathbf{e_i}$. 
This implies that $\mathbf{u}$ must be some other vertex of $Q_{n,2}$. But, all the other vertices of $Q_{n,2}$ have their i-th components equal to zero, so $u_i = 0$. Thus, equations \eqref{fora} and \eqref{forb} reduce to
\begin{align*}
    am_{i1}+bm_{i2}=1, \\
    cm_{i1}+dm_{i2}=0.
\end{align*}
The first equation implies that $$\gcd(m_{i1},m_{i2}) = 1,$$ and the second equation implies that 
\begin{align}\label{dividingms}
\frac{\displaystyle m_{i2}}{\displaystyle m_{i1}}=-\frac{\displaystyle 
c}{\displaystyle d}.\end{align}
Now since $\gcd(m_{i1},m_{i2})=1$, thus from \eqref{dividingms} we can conclude that there exists some $k \in \mathbb{Z}$, such that $c=km_{i2}$ and  $d=-km_{i1}.$

Then $1 = \gcd(m_{i_1},m_{i_2})=\frac{\gcd(c,d)}{|k|}$, where $|k|=\gcd(c,d)$. Thus, we have $$m_{i1}=\pm\frac{\displaystyle d}{\displaystyle \gcd(c,d)}, \text{ and } m_{i2}=\mp\frac{\displaystyle c}{\displaystyle \gcd(c,d)}.$$ 
Substituting this back to the equation $am_{i1}+bm_{i2}=1$, yields $ad-bc=\pm \gcd(c,d).$ 

From here, note that,
\begin{align*}
    \frac{1}{2}\left|ad-bc\right|=\frac{1}{2}\gcd(c,d)=\frac{1}{2}|k|.
\end{align*}
It follows that the matrix
    $\begin{pmatrix}
    a & \frac{c}{k}\\
    b & \frac{d}{k}
    \end{pmatrix}
\in \operatorname{GL}_2{(\mathbb{Z})}.$
\end{proof}

\begin{lemma}\label{case 3} Let $Q_{n,1}$, and $Q_{n,2}$ be $n$-dimensional pyramids as defined above, and let $U\in \operatorname{GL}_n(\mathbb{Z})\ltimes \mathbb{Z}^n$ be an affine-unimodular transformation such that $U(Q_{n,1}) = Q_{n,2}$.
If $U(\mathbf{0}) = \mathbf{e_i},$ for some $3\le i\le n$, then
\[
\begin{pmatrix}
    a & \frac{c-a}{\ell} \\
    b & \frac{d-b}{\ell} \\
\end{pmatrix}
\in \operatorname{GL}_2(\mathbb{Z}).
\]
where $\ell = \gcd{(c-a,d-b)}$
\end{lemma}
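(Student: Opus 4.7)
The plan is to follow the template of Lemma~\ref{case 2} almost verbatim, adjusted for the fact that here it is the origin (rather than $(a,b,0,\dots,0)$) that is sent to $\mathbf{e_i}$. First I would observe that $U(\mathbf{0})=\mathbf{e_i}$ forces the translation vector to be $\mathbf{u}=\mathbf{e_i}$, so $u_i=1$ while $u_j=0$ for $j\ne i$. Next, since $\mathbf{e_i}$ is now consumed by the image of the origin, the two vertices $(a,b,0,\dots,0)$ and $(c,d,0,\dots,0)$ must each map to a vertex of $Q_{n,2}$ whose $i$-th coordinate vanishes. Using $i\ge 3$, every vertex of $Q_{n,2}$ other than $\mathbf{e_i}$ does indeed have $i$-th coordinate equal to $0$: the three base vertices $\mathbf{0}$, $(a',b',0,\dots,0)$, $(c',d',0,\dots,0)$ live in the $x_1x_2$-plane, and the remaining $\mathbf{e_j}$ carry a $1$ only in position $j\ne i$.

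Reading off the $i$-th coordinate of $U(a,b,0,\dots,0)$ and $U(c,d,0,\dots,0)$ then produces the pair of equations $am_{i1}+bm_{i2}=-1$ and $cm_{i1}+dm_{i2}=-1$. The first forces $\gcd(m_{i1},m_{i2})=1$, while subtracting the two gives $(c-a)m_{i1}+(d-b)m_{i2}=0$. Together with coprimality, this determines $m_{i1}=\pm(d-b)/\ell$ and $m_{i2}=\mp(c-a)/\ell$, where $\ell=\gcd(c-a,d-b)$. Substituting back into $am_{i1}+bm_{i2}=-1$ yields $ad-bc=\mp\ell$.

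Consequently the determinant of the claimed matrix equals $\bigl(a(d-b)-b(c-a)\bigr)/\ell=(ad-bc)/\ell=\pm 1$, and since $\ell$ divides both $c-a$ and $d-b$, its entries are integers, so it lies in $\operatorname{GL}_2(\mathbb{Z})$. I do not anticipate any genuine obstacle, since the argument mirrors Lemma~\ref{case 2} step-for-step; the only care required is the case analysis verifying that every vertex of $Q_{n,2}$ other than $\mathbf{e_i}$ has vanishing $i$-th coordinate, which rests crucially on the assumption $i\ge 3$ (and would fail if the hypothesis allowed $i\in\{1,2\}$).
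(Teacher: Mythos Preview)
Your argument is correct. It is, however, a genuinely different route from the paper's. The paper does \emph{not} rerun the computation of Lemma~\ref{case 2}; instead it pre-composes $U$ with the translation $T_n(\mathbf{v})=\mathbf{v}-(a,b,0,\dots,0)$, so that the vertex $(-a,-b,0,\dots,0)$ of the shifted simplex is now the one sent to $\mathbf{e_i}$, and then invokes Lemma~\ref{case 2} as a black box with $(-a,-b)$ in the role of $(a,b)$ and $(c-a,d-b)$ in the role of $(c,d)$. That immediately yields the desired matrix (up to an irrelevant sign in the first column). Your direct approach has the virtue of being self-contained and of making transparent exactly where the number $\ell=\gcd(c-a,d-b)$ comes from (namely, from subtracting the two $i$-th–coordinate equations rather than from one of them being zero). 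The paper's approach is shorter on the page but leans on Lemma~\ref{case 2} applied to a translated simplex whose ``apex'' vertices are no longer the standard $\mathbf{e_j}$; this is harmless because the proof of Lemma~\ref{case 2} never actually uses those vertices, but strictly speaking it is a mild abuse of the lemma as stated. In that sense your version is a bit more honest.
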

\begin{proof}
    Define the translation $T_n(\mathbf{v})= \mathbf{v}-(a,b,0,\dots,0),$ for all $\mathbf{v} \in \mathbb{R}^n$. Then
\begin{align*}
    T_n(Q_{n,1}) = \mathrm{conv}\{(-a,-b,0,\dots,0),\mathbf{0},(c-a,d-b,0,\dots,0),T_n(\mathbf{e_3}),\dots,T_n(\mathbf{e_n})\}.
\end{align*}
Now let $V_n$ be the transformation such that $V_n(\mathbf{v})= U \circ T_n^{-1}(\mathbf{v}),$
for all $\mathbf{v}\in \mathbb{R}^n$. This implies that $V_n(T_n(Q_{n,1}))=U(Q_{n,1})=Q_{n,2}$.
Since $V_n$ is a composition of two affine-unimodular transformations, it must be affine-unimodular. Furthermore, we have $V_n(-a,-b,0,\dots,0) = U_n(\mathbf{0}) = \mathbf{e_i}$. 

Thus we can apply Lemma \ref{case 2} to the $n$-dimensional pyramids $T_n(Q_{n,1})$ and $Q_{n,2}$ along with the affine-unimodular transformation $V_n$. This tells us that the matrix
\[
\begin{pmatrix}
    -a & \frac{c-a}{\ell} \\
    -b & \frac{d-b}{\ell}
\end{pmatrix}
\in \operatorname{GL_n(\mathbb{Z})}
\]
where $\ell = \text{gcd}(c-a,d-b)$.
\end{proof}

Using these lemmas we now prove Proposition~\ref{backwards dir}.

\begin{proof}[Proof of Proposition~\ref{backwards dir}]
Notice that for the simplices $Q_{2,1}$, $Q_{2,2}$, $Q_{n,1}$, $Q_{n,2}$ defined as in the statement of Proposition~\ref{backwards dir}, and $U\in \operatorname{GL}_n(\mathbb{Z})\ltimes \mathbb{Z}^n$ the affine-unimodular transformation such that $U(Q_{n,1}) = Q_{n,2}$, there are exactly $3$ possible cases for the way in which $U$ acts on the individual vertices of $Q_{n,1}$.
\begin{enumerate}
\item $U(\{\mathbf{0},(a,b,0,\dots,0),(c,d,0,\dots,0)\})=\{\mathbf{0},(a',b',0,\dots,0),(c',d',0,\dots,0)\}$.

\item $U(a,b,0,\dots,0) = \mathbf{e_i}$ or $U(c,d,0,\dots,0) = \mathbf{e_i}$ for some $3\le i\le n$. Without loss of generality, assume $U(a,b,0,\dots,0) = \mathbf{e_i}$.
\item $U(0,0,\dots,0)=\mathbf{e_i},$ for some $3\le i\le n$.
\end{enumerate}

\textbf{Case 1:} In this case, $$U(\{(0,0,\dots,0),(a,b,0,\dots,0),(c,d,0,\dots,0)\})=\{(0,0,\dots,0),(a',b',0,\dots,0),(c',d',0,\dots,0)\}.$$ Note that $U(0,\dots,0)$ must have its third through $n$-th components all equal to $0$. 
Letting $M$ and $\mathbf{u}$ be the matrix and translation vector such that
\[
U(\mathbf{v}) = M\mathbf{v} + \mathbf{u},
\]

we have $U(0,\dots,0) = \mathbf{u}$, which implies that $\mathbf{u}$ is of the form $\mathbf{u} = (z,w,0,\dots,0)$ for some $z, w \in \mathbb{Z}$. 

Then we can deduce that $M$ is of the form
\begin{align*}
    M = 
    \begin{pmatrix}
    N & A \\
    O & P
    \end{pmatrix}
\end{align*}
where $O$ is the $(n-2) \times 2$ dimensional zero matrix, $P$ is a $(n-2)$-dimensional permutation matrix, $N$ is some $2\times 2$ integer matrix, and $A$ is the matrix defined by
\begin{align*}
    A =  
    \begin{pmatrix} -z & \dots &-z \\ -w & \dots & -w \end{pmatrix}.
\end{align*}
Since $M \in \operatorname{GL}_n(\mathbb{Z}),$ $\det M=\pm 1$. Now every permutation matrix also has determinant of $\pm 1$, so 
\begin{align*}
    \pm 1 = \det{M} = \det{N}\det{P} = \pm \det{N}\implies \det N=\pm 1.
\end{align*}
This means $N \in \operatorname{GL}_2(\mathbb{Z})$. Thus, note that the transformation $T$, defined by
\begin{align*}
    T(\mathbf{v}) =  N\mathbf{v} + 
    \begin{pmatrix}
    z \\ w
    \end{pmatrix}, 
\end{align*}
for all $\mathbf{v}\in P_1$, is an affine-unimodular transformation from $P_1$ to $P_2$, and so $P_1$ is unimodularly equivalent $P_2$.

\textbf{Cases 2 and 3:}
In this case, $U(a,b,0,\dots,0) = \mathbf{e_i},$ or $U(0,0,0,\dots,0) = \mathbf{e_i},$ for some $3\le i\le n$.

Then consider the affine-linear transformation $U^{-1}$ mapping $Q_{n,2}$ to $Q_{n,1}$. Using a similar analysis to that of $U$, there are $3$ possible cases:
\begin{enumerate}[label = (\alph*)]
\item $U^{-1}(\{(0,0,\dots,0),(a',b',0,\dots,0),(c',d',0,\dots,0)\})=\{(0,0,\dots,0),(a,b,0,\dots,0),(c,d,0,\dots,0)\}$.

\item $U^{-1}(a',b',0,\dots,0) = \mathbf{e_i}$ or $U^{-1}(c',d',0,\dots,0) = \mathbf{e_i}$ for some $3 \le i \le n$. Without loss of generality, $U^{-1}(a',b',0,\dots,0) = \mathbf{e_i}$. 

\item $U^{-1}(0,0,\dots,0)=\mathbf{e_i},$ for some $3\le i\le n$.
\end{enumerate}

\textbf{Case (a):}
Case (a) is impossible since we assumed that $U(a,b,0,\dots,0) = \mathbf{e_i},$ or $U(0,0,0,\dots,0) = \mathbf{e_i},$ for some $3\le i\le n$.

\textbf{Case (b):} Applying Lemma~\ref{case 2} to the inverse map $U^{-1}$ (switching $Q_{n,1}$ and $Q_{n,2}$) we get 
\[
A' = 
\begin{pmatrix}
    a' & \frac{c'}{k'} \\
    b' & \frac{d'}{k'} \\
\end{pmatrix}
\in \operatorname{GL}_n(\mathbb{Z})
\]
where $k' = \gcd{(c',d')}$. Notice that $\text{vol}(Q_{n,2}) = \abs{a'd'-b'c'} = k'$. 

If $U(a,b,0,\dots,0) = \mathbf{e_i},$ then by Lemma~\ref{case 2} we have
\[
A_1 = 
\begin{pmatrix}
    a & \frac{c}{k} \\
    b & \frac{d}{k} \\
\end{pmatrix}
\in \operatorname{GL}_n(\mathbb{Z}).
\]
where $k = \gcd{(c,d)}$. Then $\text{vol}(Q_{n,1}) = \abs{ad-bc} = k$. Note that $\text{vol}(Q_{n,1}) = \text{vol}(Q_{n,2})$ because they are pyramids over $Q_{2,1}$ and $Q_{2,2}$, which have the same volume since they are unimodularly equivalent. This implies that $k = k'$.

Then there is some matrix $T \in \operatorname{GL}_n(\mathbb{Z})$ such that $T A_1 = A'$ since $\operatorname{GL}_n(\mathbb{Z})$ is a group. Right multiplying both sides of the equation by the matrix 
$\begin{pmatrix}
1 & 0 \\
0 & k
\end{pmatrix}$
yields the equation
\[
 T \begin{pmatrix}
    a & \frac{c}{k} \\
    b & \frac{d}{k} \\
\end{pmatrix}
\begin{pmatrix}
1 & 0 \\
0 & k
\end{pmatrix}
=
\begin{pmatrix}
    a' & \frac{c'}{k} \\
    b' & \frac{d'}{k} \\
\end{pmatrix}
\begin{pmatrix}
1 & 0 \\
0 & k
\end{pmatrix}
\]\[
 T \begin{pmatrix}
    a & c \\
    b & d \\
\end{pmatrix}
=
\begin{pmatrix}
    a' & c' \\
    b' & d' \\
\end{pmatrix}
\]
so the simplices $Q_{2,1} = \mathrm{conv}\{(0,0),(a,b),(c,d)\}$ and $Q_{2,2} = \mathrm{conv}\{(0,0),(a',b'),(c',d')\}$ are unimodularly equivalent via multiplication by the matrix $T$.

If instead, $U(0,0,0,\dots,0) = \mathbf{e_i},$ then the argument is roughly the same, except we will have to do a translation at the end. By Lemma~\ref{case 3} we have
\[
A_2 = 
\begin{pmatrix}
    -a & \frac{c-a}{\ell} \\
    -b & \frac{d-b}{\ell} \\
\end{pmatrix}
\in \operatorname{GL}_n(\mathbb{Z}).
\]
where $\ell = \gcd{(c-a,d-b)}$. Notice that the determinant of this matrix is $\frac{ad-bc}{\ell}$. Then $\text{vol}(Q_{n,1}) = \abs{ad-bc} = \ell$, so since $\text{vol}(Q_{n,1}) = \text{vol}(Q_{n,2})$ we have $\ell = k'$.

Then there is some matrix $T \in \operatorname{GL}_n(\mathbb{Z})$ such that $T A_2 = A'$ since $\operatorname{GL}_n(\mathbb{Z})$ is a group. Right multiplying both sides of the equation by the matrix 
$\begin{pmatrix}
1 & 0 \\
0 & \ell
\end{pmatrix}$
yields the equation
\[
 T \begin{pmatrix}
    -a & \frac{c-a}{\ell} \\
    -b & \frac{d-b}{\ell} \\
\end{pmatrix}
\begin{pmatrix}
1 & 0 \\
0 & \ell
\end{pmatrix}
=
\begin{pmatrix}
    a' & \frac{c'}{\ell} \\
    b' & \frac{d'}{\ell} \\
\end{pmatrix}
\begin{pmatrix}
1 & 0 \\
0 & \ell
\end{pmatrix}
\]
\[
 T \begin{pmatrix}
    -a & c-a \\
    -b & d-b \\
\end{pmatrix}
=
\begin{pmatrix}
    a' & c' \\
    b' & d' \\
\end{pmatrix}
\]
So $Q_{2,2} = \mathrm{conv}\{(0,0),(a',b'),(c',d')\}$ is unimodularly equivalent to the simplex $$\mathrm{conv}\{(0,0),(-a,-b),(c-a,d-b)\}$$ via multiplication by the matrix T. However this second simplex is simply a translation of $Q_{1,1}$ by $(a,b)$, so we can conclude that $Q_{2,2}$ and $Q_{2,1}$ are unimodularly equivalent.

\textbf{Case (c)}:
Applying Lemma \ref{case 3}
to the inverse map $U^{-1}$ (switching $Q_{n,1}$ and $Q_{n,2}$) we get 
\[
A' = 
\begin{pmatrix}
    -a' & \frac{c'-a'}{\ell'} \\
    -b' & \frac{d'-b'}{\ell'} \\
\end{pmatrix}
\in \operatorname{GL}_n(\mathbb{Z})
\]
where $\ell' = \gcd{(c'-a',d'-b')}$. Notice that the determinant of this matrix is $\frac{b'c'-a'd'}{\ell}$ and so $\text{vol}(Q_{n,2}) = \abs{a'd'-b'c'} = \ell$. 

If $U(a,b,0,\dots,0) = \mathbf{e_i},$ for some $3\le i\le n$ then this turns out to be exactly analogous to the second part of case 2 (except we switched $Q_{n,1}$ and $Q_{n,2}$).

If $U(0,0,0,\dots,0) = \mathbf{e_i},$ for some $3\le i\le n$, then we can apply similar reasoning to Case 2 to get that there exists $T \in \operatorname{GL}_n(\mathbb{Z})$ such that 
\[
 T \begin{pmatrix}
    -a & \frac{c-a}{\ell} \\
    -b & \frac{d-b}{\ell} \\
\end{pmatrix}
\begin{pmatrix}
1 & 0 \\
0 & \ell
\end{pmatrix}
=
\begin{pmatrix}
    -a' & \frac{c'-a'}{\ell} \\
    -b' & \frac{d'-b'}{\ell} \\
\end{pmatrix}
\begin{pmatrix}
1 & 0 \\
0 & \ell
\end{pmatrix}
\]
\[
 T \begin{pmatrix}
    -a & c-a \\
    -b & d-b \\
\end{pmatrix}
=
\begin{pmatrix}
    -a' & c'-a' \\
    -b' & d'-b' \\
\end{pmatrix}
\]
So the simplices $\mathrm{conv}\{ (0,0),(-a,-b),(c-a,d-b)\}$ and $\mathrm{conv}\{ (0,0),(-a',-b'),(c'-a',d'-b')\}$ are unimodularly equivalent. Translating by the vectors $(a,b)$ and $(a',b')$ respectively therefore implies that $Q_{2,1}$ and $Q_{2,2}$ are unimodularly equivalent.
\end{proof}

\begin{corollary}\label{conj3}
For any $n\ge 2$, there exists two integral $n$-simplices $S_{n,1},S_{n,2}\subset\mathbb{R}^n$, that are not unimodularly equivalent.
\end{corollary}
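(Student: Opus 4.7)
The plan is to reduce the statement for general $n$ to the known case $n=2$ via the pyramid construction of Theorem~\ref{thm: main}. First I would dispose of the base case $n=2$ directly: since any $U \in \operatorname{GL}_2(\mathbb{Z}) \ltimes \mathbb{Z}^2$ preserves Euclidean volume, any two integral $2$-simplices whose volumes differ cannot be unimodularly equivalent. Explicitly, the unimodular simplex $S_{2,1} = \mathrm{conv}\{(0,0),(1,0),(0,1)\}$ and the simplex $S_{2,2} = \mathrm{conv}\{(0,0),(2,0),(0,1)\}$ have volumes $1/2$ and $1$ respectively, so they witness the corollary for $n=2$.

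For $n > 2$, the strategy is to take such a pair of non-unimodularly equivalent integral $2$-simplices $Q_{2,1}, Q_{2,2}$ (by applying a preliminary integral translation, which is itself affine-unimodular, I may assume each has a vertex at the origin, so that the setup of Theorem~\ref{thm: main} applies verbatim). I would then form the $n$-dimensional pyramids $Q_{n,1}, Q_{n,2}$ exactly as defined in the statement of Theorem~\ref{thm: main}, by coning over the basis vectors $\mathbf{e_3},\dots,\mathbf{e_n}$; these are integral $n$-simplices in $\mathbb{R}^n$. Applying the backward direction of Theorem~\ref{thm: main}, namely Proposition~\ref{backwards dir}, contrapositively: if $Q_{n,1}$ and $Q_{n,2}$ were unimodularly equivalent, then so would $Q_{2,1}$ and $Q_{2,2}$ be, contradicting our choice. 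Hence $Q_{n,1}, Q_{n,2}$ give the desired pair.

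The main obstacle is essentially absent, because Theorem~\ref{thm: main} carries almost all the weight; the only small point to verify is that the base-case $2$-simplices can be placed in the normal form with a vertex at the origin, which is always possible by an integral translation. I note that one could also, at the cost of no additional work, choose the base case to be the pair $P'_1, P'_2$ from the counterexample in Section~\ref{main results}, which have equal volume; propagating through the pyramid construction then yields non-unimodularly-equivalent integral $n$-simplices of equal Ehrhart polynomial in each dimension $n \ge 4$ (indeed, the $n=4$ case is precisely the $R_1, R_2$ example already established), giving a strengthening in the spirit of the paper's main theme.
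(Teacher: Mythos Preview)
Your proof is correct and follows the same overall strategy as the paper: establish a non-unimodularly-equivalent pair of integral $2$-simplices with a vertex at the origin, then lift via the pyramid construction and apply Theorem~\ref{thm: main} (specifically Proposition~\ref{backwards dir}) contrapositively.

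The only genuine difference is the choice of base pair. You take simplices of \emph{different} volumes, so the base case is immediate from volume-preservation and needs no computation. The paper instead uses the equal-volume pair $S_{2,1}=\mathrm{conv}\{(0,0),(9,0),(3,2)\}$ and $S_{2,2}=\mathrm{conv}\{(0,0),(6,0),(0,3)\}$ (the $P'_1,P'_2$ of Section~\ref{main results}), verifying non-equivalence with the algorithm~(SC). Your choice is more elementary and fully self-contained; the paper's choice buys the stronger conclusion you yourself flag at the end---the resulting $n$-simplices are Ehrhart-equivalent yet not unimodularly equivalent---which is closer to the paper's theme. Either route proves the corollary as stated.
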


\begin{proof}
Using (SC), we find that the $2$-simplices $S_{2,1}, S_{2,2} \subset \mathbb{R}^2$, where
$$S_{2,1} = \mathrm{conv}\{(0,0),(9,0),(3,2)\}$$
and
$$S_{2,2} = \mathrm{conv}\{(0,0),(6,0),(0,3)\},$$
are not unimodularly equivalent.
Now for any $n>2$, consider the integral $n$-simplices $S_{n,1}, S_{n,2}\subset\mathbb{R}^n$, defined by $$S_{n,1}= \mathrm{conv}\{\mathbf{0},(9,0,\ldots,0),(3,2,0,\ldots,0),\mathbf{e_3}, \ldots \mathbf{e_n} \}$$
and
$$S_{n,2}= \mathrm{conv}\{\mathbf{0},(6,0,\ldots,0),(0,3,0,\ldots,0),\mathbf{e_3}, \ldots \mathbf{e_n}\}.$$ Note that $S_{n,1}$ and $S_{n,2}$ are the $n$-dimensional pyramids determined by $S_{2,1}$ and $S_{2,2}$ respectively. Thus, by Theorem~\ref{thm: main}, $S_{n,1}$ and $S_{n,2}$ are not unimodularly equivalent.
\end{proof}

\begin{corollary}
Let $Q_{2,1}, Q_{2,2} \subset \mathbb{R}^2$ and $Q_{n,1}, Q_{n,2}$ be the $n$-dimensional pyramids as described in Theorem \ref{thm: main}. If $Q_{2,1}$ and $Q_{2,2}$ are $\operatorname{GL_2}(\mathbb{Z})$-equidecomposable, then $Q_{n,1}$ and $Q_{n,2}$ are also $\operatorname{GL_n}(\mathbb{Z})$-equidecomposable.
\end{corollary}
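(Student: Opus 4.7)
The plan is to lift the given $\operatorname{GL}_2(\mathbb{Z})$-equidecomposition of the bases to one for the pyramids via the join construction, and then extend each base affine-unimodular transformation using exactly the block-matrix construction from the proof of Proposition~\ref{higher n}. First I would fix notation: write $Q_{2,1} = \bigsqcup_{i=1}^{r} P_i$ and $Q_{2,2} = \bigsqcup_{i=1}^{r} P_i'$ with relatively open simplices $P_i, P_i'$ and affine-unimodular maps $U_i(\mathbf{v}) = M_i \mathbf{v} + \binom{x_i}{y_i}$ satisfying $U_i(P_i) = P_i'$. Embed the bases into $\mathbb{R}^n$ by zero-padding the last $n-2$ coordinates, let $S = \mathrm{conv}\{\mathbf{e_3}, \ldots, \mathbf{e_n}\}$ be the opposite face, and write $S = \bigsqcup_{j=1}^{s} S_j$ as the disjoint union of its relatively open faces.

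The next step is a join decomposition of each pyramid. Using the unique barycentric-coordinate split between the two complementary faces $Q_{2,k}$ and $S$ of the $n$-simplex $Q_{n,k}$, every point of $Q_{n,k}$ admits a unique expression $\lambda p + (1-\lambda) s$ with $\lambda \in [0,1]$, $p$ in the base, and $s \in S$. Setting $P_i \star S_j := \{\lambda p + (1-\lambda) s : p \in P_i,\, s \in S_j,\, \lambda \in (0,1)\}$, which is a relatively open simplex of dimension $\dim P_i + \dim S_j + 1$ since its vertex set is the union of affinely independent vertex sets sitting in complementary faces of $Q_{n,k}$, this yields
\[
Q_{n,1} = \bigsqcup_{i,j} (P_i \star S_j) \;\sqcup\; \bigsqcup_{i} P_i \;\sqcup\; \bigsqcup_{j} S_j,
\]
and an analogous decomposition of $Q_{n,2}$ with each $P_i$ replaced by $P_i'$.

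To match these decompositions, I would extend each $U_i$ to an affine-unimodular $V_i \in \operatorname{GL}_n(\mathbb{Z}) \ltimes \mathbb{Z}^n$ using exactly the construction in the proof of Proposition~\ref{higher n}, namely
\[
V_i(\mathbf{v}) = \begin{pmatrix} M_i & A_{n-2,i} \\ O & I_{n-2} \end{pmatrix} \mathbf{v} + (x_i, y_i, 0, \ldots, 0)^T,
\]
where every column of $A_{n-2,i}$ equals $(-x_i, -y_i)^T$. The same verification as in Proposition~\ref{higher n} shows $V_i \in \operatorname{GL}_n(\mathbb{Z}) \ltimes \mathbb{Z}^n$, $V_i$ restricts to $U_i$ on the embedded base, and $V_i(\mathbf{e_j}) = \mathbf{e_j}$ for all $3 \le j \le n$. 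Hence $V_i$ fixes $S$ pointwise and therefore fixes each $S_j$. Since $V_i$ is affine and thus preserves convex combinations, $V_i(P_i \star S_j) = V_i(P_i) \star V_i(S_j) = P_i' \star S_j$ and $V_i(P_i) = P_i'$, while the identity of $\operatorname{GL}_n(\mathbb{Z}) \ltimes \mathbb{Z}^n$ handles each $S_j \to S_j$. Pairing these matches across the two decompositions gives the required $\operatorname{GL}_n(\mathbb{Z})$-equidecomposability.

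The main technical point is verifying the displayed disjoint-union decomposition of $Q_{n,k}$; this reduces to the uniqueness of barycentric coordinates together with the observation that each open join of relatively open faces sitting in complementary faces of $Q_{n,k}$ is itself a relatively open simplex. Once that bookkeeping is set up, the extension of the affine-unimodular transformations is immediate from Proposition~\ref{higher n}, so no genuinely new obstacle appears beyond what is already handled there.
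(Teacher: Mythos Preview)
Your proof is correct and follows essentially the same idea as the paper's: lift the base decomposition to the pyramids and extend each base affine-unimodular map via the block-matrix construction of Proposition~\ref{higher n}. Your join decomposition is in fact more careful than the paper's one-line argument, which simply asserts that the pyramids over the pieces partition $Q_{n,k}$ without addressing the overlap at the apex face $S$; your treatment with the open joins $P_i \star S_j$ and the separate handling of the $P_i$ and $S_j$ is the rigorous way to make that assertion precise.
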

\begin{proof}
If $Q_{2,1}$ and $Q_{2,2}$ are $\operatorname{GL_n}(\mathbb{Z})$-equidecomposable, then they can be written in the form
$$Q_{2,1}=\bigsqcup_{i=1}^r Q_{i,1}\quad \text{and}\quad Q_{2,2}=\bigsqcup_{i=1}^r Q_{i,2}$$
where each $Q_{i,1}$ and $Q_{i,2}$ are unimodularly equivalent. Thus their $n$-dimensional pyramids are unimodularly equivalent and form partitions of $Q_{n,1}$ and $Q_{n,2}$, meaning $Q_{n,1}$ and $Q_{n,2}$ are also $\operatorname{GL_n}(\mathbb{Z})$-equidecomposable.
\end{proof}

\section{Conclusion and Open Problems} \label{future}

We conclude by gathering a few open questions that arose during the present investigation.

\subsection{Settling Conjecture~\ref{ked}} While Conjecture~\ref{ked} seems to be true for low-dimensional polytopes, we do not yet know whether it is true for dimensions $n\ge 4$. We believe that there is a lot of potential in further investigating Conjecture~\ref{ked}, as it could yield greater insight into the relationship between unimodular equivalence and $\operatorname{GL}_n(\mathbb{Z})$-equidecomposability. Additionally, more examples would be useful to verify whether the conjecture is true or whether a stronger or weaker version could be proven. Generalizing the techniques of Erbe, Haase and Santos \cite{3D} to higher dimensions remains an interesting avenue for future investigation.

\subsection{Generalizations of Theorem \ref{thm: main}} It would also be interesting to investigate further generalizations and implications of Theorem \ref{thm: main}, for example whether a similar result holds when $Q_{n,1}$ and $Q_{n,2}$ are pyramids over $3$-dimensional, or general $m$-dimensional simplices rather than only $2$-dimensional simplices. Additionally, further research could be done investigating other ways to extend Ehrhart-equivalence, unimodular equivalence, and $\operatorname{GL}_n(\mathbb{Z})$-equidecomposability from lower dimensions to their higher dimensional pyramids and vice versa.

\section{Acknowledgements}
This research was sponsored by the Clay Mathematics Institute and conducted during the PROMYS program in 2020. We wish to thank them for the providing us with this research opportunity. We would also like to thank Professor Kiran Kedlaya for his guidance and help throughout the process.

\end{document}